\documentclass[11pt,a4paper]{article}
\usepackage[utf8]{inputenc}
\usepackage[english]{babel}
\usepackage{geometry}
\usepackage{amsmath}
\usepackage{amssymb}
\usepackage{mathtools}
\usepackage{booktabs}
\usepackage{hyperref}
\usepackage{amsthm}
\usepackage{enumitem}
\usepackage{tikz}
\usepackage{nccmath}
\usepackage{csquotes}

\usepackage{esint}
\usepackage{tikz-cd}
\usepackage{tikz}
\usepackage{bbm}

\usepackage{dsfont}
\usepackage{hyperref}

\usepackage{mathrsfs}
\usepackage{braket}
\usepackage{mathtools}
\usepackage{booktabs}
\usepackage{caption}
\usepackage{cancel}
\usepackage{subfig}
\usepackage{epstopdf}

\mathtoolsset{showonlyrefs}

\numberwithin{equation}{section}
\newcommand{\numberset}{\mathbb}

\newcommand{\R}{\numberset{R}}

\newcommand{\dz}{d_0}
\newcommand{\nz}{\nu_0}
\newcommand{\dhz}{\hat{d}_0}

\newcommand{\uu}{\hat{u}}
\newtheorem{lemma}{Lemma}[section]
\newtheorem{theorem}{Theorem}
\newtheorem{prp}[lemma]{Proposition}
\newtheorem{cor}[lemma]{Corollary}
\newtheorem{defn}[lemma]{Definition}

\newtheorem{remark}[lemma]{\textbf{Remark}}
\usepackage[style=alphabetic]{biblatex}
\renewbibmacro{in:}{%
  \ifentrytype{article}
    {}
    {\bibstring{in}%
     \printunit{\intitlepunct}}}
\addbibresource{main.bbl}
%\addbibresource{main.bib}

\title{Radial blow-up standing solutions for the semilinear wave equation}
%\author{Maissâ Boughrara\vspace{1em} \\
%Université Sorbonne Paris Nord, LAGA (UMR 7539),\\ F-93430 Villetaneuse, France.} 
\author{Maissâ Boughrara and Hatem Zaag\\
\textit{\small Université Sorbonne Paris Nord, LAGA, CNRS (UMR 7539), F-93430 Villetaneuse, France.}
}
\date{}

\begin{document}
\maketitle

\begin{abstract}
We consider the semilinear wave equation with a power nonlinearity in the radial case. Given $r_0>0$, we construct a blow-up solution such that the solution near $(r_0,T(r_0))$ converges exponentially to a soliton. Moreover, we show that $r_0$ is a non-characteristic point. For that, we translate the question in self-similar variables and use a modulation technique. We will also use energy estimates from the one dimensional case treated in \cite{MZ2007}. Of course because of the radial setting, we have an additional gradient term which is delicate to handle. That's precisely the purpose of our paper.
\end{abstract}

%\tableofcontents
\section{Introduction}
We are concerned with  blow-up phenomena for real-valued solutions of the following semilinear wave equation:

\begin{equation}\label{eq initial}
\left\lbrace
\begin{aligned}
&\partial^2_t U=\Delta U+|U|^{p-1}U,\\
&U(.,0)=U_0,\\
&U_t(.,0)=U_1,
\end{aligned}
\right.
\end{equation}
where $1<p$, if $N\geq 2$ then $p\leq 1+\frac{4}{N-1}$ , and $(U_0,U_1)\in H^1_{loc,u}(\R^N) \times L^2_{loc,u}(\R^N)$. The space $L^2_{loc,u}$ is the set of all $v$ in $L^2_{loc}$ such that

\begin{equation}\label{norm L^2_{loc,u}}
||v||_{L^2_{loc,u}}:=\underset{a\in \R^N}{\sup}\left(\int_{|x-a|<1}|v(x)|^2  d x\right)^{\frac{1}{2}}<+\infty,
\end{equation}
and the space $H^1_{loc,u}= \{ v\in L^2_{loc,u}|\nabla v\in L^2_{loc,u} \}$. The Cauchy problem of \eqref{eq initial} can be solved locally in time in the space $H^1_{loc,u} \times L^2_{loc,u}$ (see \cite{GSV1992} and \cite{LS1995}). For the existence of blow-up solutions, it follows from ODE techniques or the energy-based blow-up criterion of Levine in \cite{L1974}.

Concerning the blow-up behavior for general solutions (i.e. with no symmetry assumptions), we first mention the result of Merle and Zaag in \cite{MZ2003}, \cite{MZ2005MA} and \cite{MZ2005} who showed that the blow-up rate is given by $C(T-t)^{-\frac{2}{p-1}}$, i.e. the solution of the associated ODE $u"=u^p$, where $T$ is the (local) blow-up time.

As for the derivation of the asymptotic behavior of the blow-up, it depends on the dimension. Indeed, if $N=1$, all has been described by Merle and Zaag in a series of papers (\cite{MZ2007}, \cite{MZ2008}, \cite{MZ2012}, \cite{MZ2012E}...etc). In higher dimension $N\geq 2$, some partial results are available in \cite{MZ2015CMP}, \cite{MZ2016}... etc.

The main obstruction in the generalization of the one-dimensional case to higher dimension stems from the lack of a classification of all self-similar solutions of \eqref{eq initial} in the energy spaces. For that reason, Merle and Zaag addressed in \cite{MZ2011BSM} the radial case, which reduces to a perturbed one dimensional case, if we focus on the blow-up behavior outside the origin. In particular, they showed that the solution in similarity variables converges to the one-dimensional soliton. However, they gave no example of initial data which behaves like that. This is precisely our aim in this paper. Therefore, we assume here that $U$ is a radial solution. Introducing

\begin{equation}\label{radial solution}
    u(r,t)=U(x,t) \text{ where } r=|x|,
\end{equation}
we rewrite \eqref{eq initial} as
\begin{equation}\label{eq radial}
\left\lbrace
\begin{aligned}
&\partial^2_t u=\partial^2_r u+\frac{(N-1)}{r}\partial_r u+|u|^{p-1}u,\\
&\partial_r u(0,.)=0,\\
&u(.,0)=u_0,\\
&u_t(.,0)=u_1.
\end{aligned}
\right.
\end{equation}
If $u$ blows up then we define a 1-Lipschitz curve $\Gamma=\{(r,T(r)), r\geq 0\}$ such that the domain of definition $D$ called the maximum influence domain of $u$ is written as the following 
$$D=\{(r,t)|t<T(r)\}.$$
$\Gamma$ is called the blow-up graph of $u$.  A point $r_0$ is a non-characteristic point if there are
\begin{equation}\label{def non charac}
\delta_0\in (0,1) \text{ and  }t_0 <T(r_0) \text{ such that } u \text{ is defined on } \mathcal{C}_{r_0,T(r_0),\delta_0} \cap \{t \geq t_0\}\cap \{r \geq 0\}, 
\end{equation}
where $C_{\bar r,\bar t,\bar \delta} =\{(r,t) | t<\bar t - \bar \delta|r - \bar r|\}$. We denote by $\mathcal{R} \subset \R^+$ (resp. $\mathcal{S} \subset \R^+$) the set of non characteristic (resp. characteristic) points.

\subsection{Explicit solutions}
Given $r_0>0$, $T>0$, we define for all $|r-r_0|<T-t$, $|d| < 1$ and $\nu>-1+|d|$ the following family of functions
\begin{equation}\label{def uu}
    \uu(d,\nu,r_0,T,r,t)=\kappa_0\frac{(1-d^2)^{\frac{1}{p-1}}}{((1+\nu)(T-t)+d(r-r_0))^{\frac{2}{p-1}}}\text{ where } \kappa_0=\left(\frac{2(p+1)}{(p-1)^2}\right)^{\frac{1}{p-1}}.
\end{equation}
One may see that $\uu(d,0,r_0,T)$ is a blow-up solution of \eqref{eq radial} in one dimension, for $\nu=0$. Given $r_0>0$ and $T_0\leq T(r_0)$, we introduce the following similarity variables
\begin{equation}\label{similarity variables}
\begin{split}
y=\frac{r-r_0}{T_0-t},\ s=-\log(T_0-t).\\
w_{r_0,T_0}(y,s)=(T_0-t)^{\frac{2}{p-1}}u(r,t).
\end{split}
\end{equation}
The function $w=w_{r_0,T_0}$ satisfies the following equation for $|y|<1$ and $s>-\log T_0$:
\begin{equation}\label{eq similary}
\partial^2_s w=\mathcal{L}w - \frac{2(p+1)}{(p-1)^2}w+|w|^{p-1}w-\frac{p+3}{p-1}\partial_s w-2y\partial^2_{y,s} w+e^{-s}\frac{(N-1)}{r_0+ye^{-s}}\partial_y w,
\end{equation}
where 
\begin{equation}\label{def L}
\mathcal{L}w=\frac{1}{\rho}\partial_y(\rho(1-y^2)\partial_yw) \text{ and } \rho(y)=(1-y^2)^{\frac{2}{p-1}}.
\end{equation}
%Let us introduce the following functional of \eqref{eq similary}
%\begin{equation*}
%E(w)=\int^1_{-1}(\frac{1}{2}(\partial_s w)^2+\frac{1}{2}(\partial_y w)^2(1-y^2)+\frac{(p+1)}{(p-1)}w^2-\frac{1}{p+1}|w|^{p+1})\rho dy. 
%\end{equation*}
%We define the following norm
%\begin{equation}\label{def norm H*}
%||(q_1,q_2)||_{\mathcal{H}^*}^2=\int^{r_0+(T(r_0)-t)}_{r_0-(T(r_0)-t)}\left(q_1^2+(q_1')^2((T(r_0)-t)^2-(r-r_0)^2)+q_2^2\right)\mu(r) dr,
%\end{equation}
%with $\mu(r)=((T(r_0)-t)^2-(r-r-0)^2)^{\frac{2}{p-1}}$.
We have for all $|d| < 1$ the following solitons defined by
\begin{equation}\label{def kappa}
\kappa(d,y)=\kappa_0 \left(\frac{1-d^2}{(1+dy)^2}\right)^{\frac{1}{p-1}} \text{ where } |y|<1,
\end{equation}
and $\kappa_0$ is given in \eqref{def uu}. Note that $\kappa(d)$ is a stationary solution of \eqref{eq similary} in one space dimension. We introduce the "generalized solitons" or solitons for short defined for all $|y|<1$, $|d| < 1$ and $\nu>-1+|d|$, by $\kappa^*(d, \nu, y)=\left(\kappa_1^*, \kappa_2^*\right)(d, \nu, y)$ where
\begin{align}
& \kappa_1^*(d, \nu, y)=\kappa_0 \frac{\left(1-d^2\right)^{\frac{1}{p-1}}}{(1+d y+\nu)^{\frac{2}{p-1}}}, \label{kappa*1}\\
& \kappa_2^*(d, \nu, y)=\nu \partial_\nu \kappa_1^*(d, \nu, y)=-\frac{2 \kappa_0 \nu}{p-1} \frac{\left(1-d^2\right)^{\frac{1}{p-1}}}{(1+d y+\nu)^{\frac{p+1}{p-1}}}.\label{kappa*2}
\end{align}

Notice also that for all $\mu\in \R$, $\overline{\kappa^*}(d,\mu,y,s)=\kappa^*(d,\mu e^{ s},y)$ is a solution of the first order vector-valued formulation of \eqref{eq similary} in dimension one.

\subsection{Main results}
We first introduce the following spaces:
\begin{equation}\label{def H}
\mathcal{H}=\{(q_1,q_2)\in H^1_{loc} \times L^2_{loc}|\ ||(q_1,q_2)||_{\mathcal{H}}^2=\int^1_{-1}\left(q_1^2+(q_1')^2(1-y^2)+q_2^2\right)\rho dy<+\infty\},
\end{equation}
\begin{equation}\label{def H0}
\mathcal{H}_0=\{q_1\in H^1_{loc} |\ ||q_1||_{\mathcal{H}_0}^2=\int^1_{-1}\left(q_1^2+(q_1')^2(1-y^2)\right)\rho dy<+\infty\},
\end{equation}
where $\rho$ is given in \eqref{def L}. Note that the norm of the $\mathcal{H}$ space can be defined by the inner product 
\begin{equation}\label{norm H}
    ||q||_{\mathcal{H}}=\sqrt{\phi(q,q)},
\end{equation}
where 
\begin{equation}\label{def phi}
\phi(q,r)=\int^1_{-1}\left(q_1r_1+q_1'r_1'(1-y^2)+q_2r_2\right)\rho dy.
\end{equation}
Using integration by parts and the definition of $\mathcal{L}$ in \eqref{def L}, we have the following identity
\begin{equation}
\phi(q,r)=\int^1_{-1}\left(q_1(-\mathcal{L}r_1+r_1)+q_2r_2\right)\rho dy.
\end{equation}
The following is our main result giving us the existence of blow-up solutions converging to solitons, for well-chosen initial data.
\begin{theorem}[A solution converging to a soliton in similarity variables]\label{Thm}
    There exists $\delta\in (0,1)$, such that for all $r_0>0$ and $\dhz\in (-1,1)$, there exist  $s_0> -\log r_0$, $\nz, \dz \in \R $, and $\varepsilon_0> 0$ such that equation \eqref{eq radial} with the initial data given by 
    
    \begin{equation}\label{initial data u}
    \binom{u_0(r)}{u_1(r)}=\chi(r)\binom{\uu(\dz,\nz,r_0,T_0,r,0)}{\frac{1}{1+\nz}\partial_t\uu(\dz,\nz,r_0,T_0,r,0)}, \text{ for all } r\geq r_0,
    \end{equation}
    with $T_0=e^{-s_0}$, has a solution $u(r,t)$ defined for all $(r,t)\in (\R^+)^2$ such that $0\leq t <T(r)$ for some 1-Lipschitz curve $r\mapsto T(r)$. Moreover, $r_0$ is a non-characteristic point, $T(r_0)=T_0$ and for $w=w_{r_0,T(r_0)}$, we have $(w(s),\partial_s w(s))\in \mathcal{H}$ satisfies

   \begin{equation}\label{conv in H}
\left|\left|\binom{w(s)}{\partial_s w(s)}-\binom{\kappa(\dhz)}{0}\right|\right|_{\mathcal{H}}\leq Ce^{-\delta(s-s_0)-s_0}.    
\end{equation}
    %\begin{equation}\label{conv sol}
    %\begin{split}
   % \left|\left|\binom{u(t)}{\partial_t u(r,t)(T_0-t)-\partial_ru(r-r_0)-\frac{2}{p-1}u}\right.\right.&\left.\left.-\binom{(T_0-t)^{-\frac{2}{p-1}}\kappa\left(\dhz,\frac{r-r_0}{T_0-t}\right)}{0}\right|\right|_{\mathcal{H}^*}\\
   % &\leq CT_0^{1-\delta}(T_0-t)^{\delta+\frac{1}{2}},
   % \end{split}
   % \end{equation}
    where $\uu$ and $\kappa$ are given in \eqref{def uu}, \eqref{def kappa}, and $\chi\in \mathcal{C}^\infty_0$, with $\chi\equiv 1$ on $[r_0-T_0-\varepsilon_0,r_0+T_0+\varepsilon_0]$, $\chi\equiv 0$ on $\R\backslash[r_0-T_0-2\varepsilon_0,r_0+T_0+2\varepsilon_0]$.
    
\end{theorem}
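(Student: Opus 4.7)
The plan is to work in the self-similar coordinates \eqref{similarity variables} centered at $(r_0,T_0)$ with $T_0=e^{-s_0}$, and to rewrite \eqref{eq similary} as a first-order system for $W(y,s)=(w(y,s),\partial_s w(y,s))$. This system is precisely the one-dimensional problem studied in \cite{MZ2007} plus the radial remainder
\[
R_N(W,y,s)=\binom{0}{\,e^{-s}\,\dfrac{N-1}{r_0+ye^{-s}}\,\partial_y w\,},
\]
which for $|y|<1$ and $s\geq s_0>-\log r_0$ obeys $\|R_N(W,\cdot,s)\|_{L^2_\rho}\lesssim e^{-s}\|\partial_y w\|_{L^2_\rho}$. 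By finite speed of propagation, the cutoff $\chi$ in \eqref{initial data u} is invisible inside the backward light cone from $(r_0,T_0)$, so that $W(\cdot,s_0)=\kappa^*(\dz,\nz,\cdot)$ on $(-1,1)$.

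I would then set up a standard modulation: write
\[
W(y,s)=\kappa^*(d(s),\nu(s),y)+q(y,s), \qquad \phi\bigl(q,\partial_d\kappa^*(d,\nu)\bigr)=\phi\bigl(q,\partial_\nu\kappa^*(d,\nu)\bigr)=0,
\]
with the bilinear form \eqref{def phi}. This is the natural choice because the two parameters of the explicit family $\kappa^*$ correspond to the only non-decaying directions of the 1D linearization at $\kappa(d)$: the translation mode $\partial_d\kappa^*(d,0)$ (eigenvalue $0$) and the expanding mode $\partial_\nu\kappa^*(d,0)$ (eigenvalue $1$, obtained by differentiating $\overline{\kappa^*}(d,\mu,y,s)=\kappa^*(d,\mu e^s,y)$ in $\mu$). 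The implicit function theorem yields the decomposition as long as $\|q\|_{\mathcal H}$ and $|\nu|$ stay small, and projecting the equation onto the two directions produces ODEs for $d'(s)$ and $\nu'(s)$ whose source terms are quadratic in $q$ plus contributions from $R_N$ of size $O(e^{-s})$.

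The core analytic step is to adapt the Lyapunov functional $\mathcal E(q)$ of \cite{MZ2007} to the radial setting. In the pure 1D regime one has $\tfrac{d}{ds}\mathcal E(q)\leq -c\|q\|_{\mathcal H}^2$ modulo terms absorbed by modulation. Differentiating $\mathcal E$ along the radial equation produces an extra contribution of the type
\[
\int_{-1}^{1}e^{-s}\,\frac{N-1}{r_0+ye^{-s}}\,(\partial_y w)(\partial_s w)\,\rho\,dy,
\]
together with analogous commutator terms coming from the $\mathcal L$-structure. \emph{This is the main obstacle}, exactly as announced in the abstract: the perturbation contains a spatial gradient of the same order as the leading term in $\mathcal E$. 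The strategy is to integrate by parts in $y$, use the vanishing of $\rho(1-y^2)$ at $y=\pm1$, and exploit the smallness of the factor $e^{-s}(r_0+ye^{-s})^{-1}$ to dominate this extra contribution by $Ce^{-s}\bigl(\|q\|_{\mathcal H}^2+\|q\|_{\mathcal H}+1\bigr)$, an admissible perturbation of the coercive 1D estimate. Combined with the modulation ODEs, this yields a differential inequality of the form $\tfrac{d}{ds}\mathcal E(q)\leq -c\,\mathcal E(q)+Ce^{-2s}$, whose Gronwall integration gives $\|q(s)\|_{\mathcal H}\leq Ce^{-\delta(s-s_0)-s_0}$, together with $|\nu(s)|+|d(s)-d_\infty|\leq Ce^{-\delta(s-s_0)-s_0}$ for some limit $d_\infty\in(-1,1)$.

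To close the argument, I would fix the free parameters by a shooting scheme. First take $s_0$ large (depending on $r_0$ and $\dhz$) so that the radial perturbation is small enough to run the modulation and energy estimates uniformly on $[s_0,\infty)$; then choose $\nz$ via an implicit function argument so as to kill the expanding component at $s=s_0$ and trap the trajectory on the stable manifold; finally adjust $\dz$ close to $\dhz$ (both turn out to be $O(e^{-s_0})$-close to the target) so that $d_\infty=\dhz$, which gives \eqref{conv in H}. Non-characteristicity of $r_0$ and the identity $T(r_0)=T_0$ then follow from the $\mathcal H$-smallness of $q(s)$ together with finite-speed-of-propagation and the usual self-similar trapping/cone arguments, as in \cite{MZ2007}.
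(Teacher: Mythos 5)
Your analytic core coincides with the paper's: same similarity variables, same decomposition $W=\kappa^*(d(s),\nu(s))+q$, same strategy of treating the radial term $e^{-s}\tfrac{N-1}{r_0+ye^{-s}}\partial_y w$ as an $O(e^{-s})$ perturbation of the one-dimensional Lyapunov-functional machinery of \cite{MZ2007}, and the same back-transfer to the $u$ variables (cutoff, Cauchy theory, cone argument for $T(r_0)=T_0$, non-characteristicity via \cite{MZ2011BSM}). Two minor deviations: you impose orthogonality against $\partial_d\kappa^*,\partial_\nu\kappa^*$ rather than against the adjoint directions $W^{d^*}_\lambda$ of \eqref{def W_lambda,2}; the paper's choice is what makes $\pi^{d^*}_\lambda(L_{d^*}q)=\lambda\,\pi^{d^*}_\lambda(q)$ vanish exactly, so your variant would generate additional commutator terms to estimate. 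Also, the source in the energy inequality is $Ce^{-s}$, not $Ce^{-2s}$: the term $\int\kappa(d^*)\partial_y\kappa^*_1\,\rho$ in \eqref{proj exp term 1} is $O(1)$ and is not paired with $q$, so Gronwall yields $\|q\|^2_{\mathcal H}\le Ce^{-\delta(s-s_0)-s_0}$ rather than the stronger rate you state for $\|q\|_{\mathcal H}$ itself.

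The genuine gap is in your final parameter-selection step. You propose to ``choose $\nz$ via an implicit function argument so as to kill the expanding component at $s=s_0$'', but for the initial data \eqref{initial data soliton} one has $q(s_0)\equiv 0$ identically for \emph{every} admissible $(\dz,\nz)$, so there is no component to kill at the initial time; the role of $(\dz,\nz)$ is to control the future drift of the modulation parameters $(d(s),\nu(s))$, which are continually re-excited by the $O(e^{-s})$ forcing and by the quadratic terms. An IFT applied at $s=s_0$ therefore cannot select the trajectory, and a sequential scheme (first $\nz$, then $\dz$) would require proving existence, uniqueness and continuity of the shooting parameter $\nz(\dz)$, none of which is addressed. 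The paper instead runs a genuinely two-dimensional topological argument: it introduces the shrinking set $V_{A,\dhz}(s)$ of \eqref{ineq V_A,d_0}, shows via the differential inequalities of Lemma \ref{lemma diff ineq para} that the exit from $V_{A,\dhz}$ can only occur through the $d$- or $\nu$-boundaries and is transverse outgoing there, and concludes by the non-existence of a continuous retraction of the parameter square $\Omega$ onto its boundary. Without this (or an equivalently careful nested intermediate-value argument), your construction does not close.
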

\begin{remark}
    For the case of $N=1$ or of $\dhz=0$, we have an explicit solution. Indeed:
    \begin{itemize}
        \item if $\dhz=0$, then $u(r,t)=\kappa(T_0-t)^{-\frac{2}{p-1}}$,
        \item  if $N=1$, then $u(r,t)=\uu (\dhz,0,T_0,r,t)$ defined in \eqref{def uu},
    \end{itemize}
    for all for $T_0>0$. Therefore, the result is meaningful when $N\geq 2$ and $\dhz\neq0$.
\end{remark}
\begin{remark}

Our proof relies on the use of similary variables \eqref{similarity variables}. In that setting, we will linearize equation \eqref{eq similary} around the soliton $\kappa(\dhz)$, and use a modulation technique together with energy methods from the one dimensional case treated in \cite{MZ2007}. Accordingly our focus will be on the term $e^{-s}\frac{(N-1)}{r_0+ye^{-s}}\partial_y w$ which needs a delicate treatment that is precisely the goal of our paper.
\end{remark}

Using the converging technique of \cite{MZ2005} and proceeding as in \cite{MZ2007} for $N=1$ and \cite{MZ2011BSM} for $N\geq 2$ in the radial case, we derive the following stronger convergence:
%From Corollary 4 in \cite{MZ2007} and the uniqueness converge of \eqref{conv in H} we have the following corollary: 
\begin{cor}[Convergence in $H^1\times L^2(-1,1)$]\label{cor H1L2 conv}
Under the hypotheses of Theorem \ref{Thm}, it holds that:
\begin{equation}\label{conv in L2*H1}
\left|\left|\binom{w(s)}{\partial_s w(s)}-\binom{\kappa(\dhz)}{0}\right|\right|_{H^1(-1,1)\times L^2(-1,1)}\underset{s\rightarrow +\infty}{\longrightarrow}    0.
\end{equation}
\end{cor}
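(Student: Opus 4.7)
The plan is to apply the covering technique of \cite{MZ2005} in order to upgrade the weighted convergence of Theorem \ref{Thm} into an unweighted one. The obstruction is that the weight $\rho(y)=(1-y^2)^{2/(p-1)}$ in the $\mathcal{H}$-norm vanishes at $y=\pm 1$, so the exponential decay of $\binom{w(s)}{\partial_s w(s)}-\binom{\kappa(\dhz)}{0}$ degenerates near the boundary of the light cone. To recover uniform control on $(-1,1)$, the idea is to exchange information between similarity frames centered at different base points close to $r_0$: what is a boundary point in the frame of $r_0$ becomes an interior point (where $\rho$ is bounded away from zero) in the frame of a neighbor.

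As a first step I would show that $r_0$ admits a neighborhood $I \subset \mathcal{R}$ of non-characteristic points. Combining the 1-Lipschitz character of $\Gamma$ with the cone \eqref{def non charac}, one can propagate the cone $\mathcal{C}_{r_0,T(r_0),\delta_0}$ to any nearby $r$ using the finite speed of propagation for \eqref{eq radial}. Then, for each $r\in I$, the function $w_{r,T(r)}$ defined by \eqref{similarity variables} solves \eqref{eq similary} with $r_0$ replaced by $r$, and one can run the same modulation and energy analysis used in the proof of Theorem \ref{Thm} to obtain a continuous map $r\mapsto d(r)\in(-1,1)$ and a uniform exponential decay
\begin{equation}
\left\|\binom{w_{r,T(r)}(s)}{\partial_s w_{r,T(r)}(s)}-\binom{\kappa(d(r))}{0}\right\|_{\mathcal{H}}\leq C e^{-\delta(s-s_0)-s_0},\qquad r\in I.
\end{equation}
Here the radial correction $e^{-s}\frac{(N-1)}{r+ye^{-s}}\partial_y w$ is treated as in the proof of the Theorem: for $r\in I$ bounded away from $0$ and $|y|<1$, it is $O(e^{-s})$ and is absorbed in the exponential rate.

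Next, the covering step. Fix $y_0\in(-1,1)$ with $y_0$ close to $+1$ (the case $y_0\to -1$ is symmetric). For $r$ slightly larger than $r_0$, the physical point $(r_0+y_0e^{-s},\,T_0-e^{-s})$ corresponds, in the similarity variables centered at $(r,T(r))$, to a point $y'$ which can be made to lie in a compact subset of $(-1,1)$ by a suitable choice of $r$. In that region the weight $\rho(y')$ is bounded below, so the $\mathcal{H}$-convergence for $w_{r,T(r)}$ yields $H^1\times L^2$-convergence of $w_{r,T(r)}(s)$ near $y'$. Pulling this back through the change of similarity variables (which is a smooth diffeomorphism with bounded Jacobian since $r_0>0$ and $s$ is large, using once more that the radial term $e^{-s}\frac{N-1}{r_0+ye^{-s}}\partial_y w$ is negligible) provides $H^1\times L^2$-convergence of $w_{r_0,T_0}(s)$ in a neighborhood of $y_0$. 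A finite covering of $[-1,1]$ by such neighborhoods, together with the interior control supplied directly by Theorem \ref{Thm}, concludes \eqref{conv in L2*H1}.

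The main technical difficulty is the first step, namely the extension of Theorem \ref{Thm} to points $r\in I$ with a rate independent of $r$; this is where one must check that the modulation arguments and energy estimates from \cite{MZ2007}, complemented by the radial treatment carried out in the proof of Theorem \ref{Thm}, are robust under small translations of the base point. Once this uniformity is established, the covering and the pull-back are essentially geometric and follow the scheme of \cite{MZ2005}, \cite{MZ2007} for $N=1$ and \cite{MZ2011BSM} for the radial case $N\geq 2$.
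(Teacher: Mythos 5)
Your overall strategy is the one the paper has in mind: the paper gives no written proof of this corollary, stating only that it follows from the converging technique of \cite{MZ2005} together with \cite{MZ2007} and \cite{MZ2011BSM}, and your covering scheme (degenerate weight $\rho$ at $y=\pm1$, exchange of similarity frames between $r_0$ and nearby base points, finite covering of $[-1,1]$) is a faithful reconstruction of that technique.

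There is, however, one step in your first stage that would fail as written. You propose to obtain the decay of $w_{r,T(r)}$ for neighboring $r$ by ``running the same modulation and energy analysis used in the proof of Theorem \ref{Thm}''. But that analysis is not self-contained: the energy estimates of Proposition \ref{Dynamics of the parameters} control $\|q\|_{\mathcal{H}}$ only while the solution stays in the shrinking set $V_{A,\dhz}(s)$, and keeping it there requires controlling the unstable parameter $\nu(s)$, which in the paper is achieved by the topological shooting argument of Proposition \ref{initial data} — i.e.\ by \emph{choosing} $(\dz,\nz)$ in the initial data. For a neighboring base point $r\neq r_0$ the initial data of $w_{r,T(r)}$ is whatever the already-constructed solution happens to be; you have no parameters left to adjust, so the shooting argument is unavailable and the mode $\nu$ could a priori escape. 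The correct substitute is the classification/trapping result at non-characteristic points (Theorem 6 of \cite{MZ2011BSM}, building on \cite{MZ2012}): once you know $r$ is non-characteristic and $(w_{r,T(r)},\partial_s w_{r,T(r)})$ enters a small neighborhood of a soliton, the fact that $T(r)$ is the \emph{genuine} blow-up time forces $\nu(s)\to 0$ (a nonzero limit of $\nu$ would shift the blow-up time), and convergence to $\kappa(d(r))$ follows. Note also that the corollary claims only convergence, not a rate, so you do not need the uniform exponential bound you assert; pointwise-in-$r$ convergence from the trapping result, combined with the uniform boundedness in $\mathcal{H}$ from \cite{MZ2005} on the finitely many frames of your covering, is enough to close the argument. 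With that substitution the rest of your outline (openness of the set of non-characteristic points near $r_0$, the frame-change diffeomorphism, the finite covering) is sound.
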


We can restate this corollary in the initial variables, but let us first give the following definition of the equivalence of functions in the normalized $L^2_{\{|r-r_0|<T_0-t\}}$.
\begin{defn}[Equivalence of functions in the normalized $L^2_{\{|r-r_0|<T_0-t\}}$ norm]
    Let $g=g(r,t)$ and $f=f(r,t)$ be two functions defined on $(\R^+)^2$, such that $|r-r_0|<T_0-t$. We say that $f$ and $g$ are equivalent ($f\sim g$) if
    $$\fint_{|r-r_0|<T_0-t} (f(r,t)-g(r,t))^2dr=o\left(\fint_{|r-r_0|<T_0-t} f(r,t)^2dr\right)\text{ as }t\mapsto T.$$
\end{defn}
Therefore, using \eqref{similarity variables}, we obtain the following equivalences in the initial set of variables $u(r,t)$:

\begin{cor}[Profile in the $u(x,t)$ variables]\label{equiv initial variable}
Under the assumptions of Theorem \ref{Thm}, we have the following equivalences as $t\rightarrow T_0$:
\begin{equation}
\begin{split}
& u\sim \uu(\dhz,0,r_0,T_0),\ \partial_r u\sim \partial_r \uu(\dhz,0,r_0,T_0),\text{ and } \partial_tu\sim \partial_t\uu(\dhz,0,r_0,T_0).
\end{split}
\end{equation}
\end{cor}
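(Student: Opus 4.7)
The plan is to translate each equivalence to the similarity variables defined in \eqref{similarity variables} and then invoke Corollary \ref{cor H1L2 conv}. The starting observation is that $\hat u(\hat d_0,0,r_0,T_0,r,t)$ is itself a self-similar solution: a direct computation from \eqref{def uu} and \eqref{def kappa} gives
\[
\hat u(\hat d_0,0,r_0,T_0,r,t)=(T_0-t)^{-\frac{2}{p-1}}\,\kappa(\hat d_0,y),\qquad y=\frac{r-r_0}{T_0-t},
\]
so in similarity variables the target profile is the stationary soliton $\kappa(\hat d_0)$ itself. Consequently,
\[
u(r,t)-\hat u(\hat d_0,0,r_0,T_0,r,t)=(T_0-t)^{-\frac{2}{p-1}}\bigl(w(y,s)-\kappa(\hat d_0,y)\bigr).
\]

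Performing the change of variable $r\mapsto y=(r-r_0)/(T_0-t)$ (so $dr=(T_0-t)\,dy$) in the normalized $L^2_{\{|r-r_0|<T_0-t\}}$ integral, all factors of $(T_0-t)$ cancel and the first equivalence $u\sim \hat u(\hat d_0,0,r_0,T_0)$ reduces to
\[
\int_{-1}^{1}\bigl(w(y,s)-\kappa(\hat d_0,y)\bigr)^{2}dy=o\!\left(\int_{-1}^{1}\kappa(\hat d_0,y)^{2}\,dy\right)\text{ as }s\to+\infty,
\]
which is immediate from the $L^2(-1,1)$ convergence provided by Corollary \ref{cor H1L2 conv}, since the right-hand side is a positive constant when $\hat d_0\in(-1,1)$.

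For the spatial derivative, the chain rule yields $\partial_r u=(T_0-t)^{-\frac{2}{p-1}-1}\partial_y w$ and $\partial_r\hat u(\hat d_0,0,r_0,T_0)=(T_0-t)^{-\frac{2}{p-1}-1}\partial_y\kappa(\hat d_0)$, so the same change of variables reduces the claim to $\|\partial_y w(s)-\partial_y\kappa(\hat d_0)\|_{L^2(-1,1)}\to 0$, which is contained in the $H^1(-1,1)$ part of Corollary \ref{cor H1L2 conv} (the denominator $\int_{-1}^{1}(\partial_y\kappa(\hat d_0))^{2}dy$ is a positive constant in the meaningful case $\hat d_0\neq 0$). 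For the time derivative, a direct computation gives
\[
\partial_t u=(T_0-t)^{-\frac{2}{p-1}-1}\!\left(\tfrac{2}{p-1}w+y\partial_y w+\partial_s w\right),
\]
and, since the soliton profile is stationary in $s$,
\[
\partial_t\hat u(\hat d_0,0,r_0,T_0)=(T_0-t)^{-\frac{2}{p-1}-1}\!\left(\tfrac{2}{p-1}\kappa(\hat d_0)+y\partial_y\kappa(\hat d_0)\right).
\]
After the change of variable, the equivalence becomes
\[
\int_{-1}^{1}\!\Bigl[\tfrac{2}{p-1}(w-\kappa(\hat d_0))+y\partial_y(w-\kappa(\hat d_0))+\partial_s w\Bigr]^{2}dy\;=\;o(1),
\]
which once more follows from Corollary \ref{cor H1L2 conv} by combining the $H^1$ convergence of $w(s)$ and the $L^2$ convergence of $\partial_s w(s)$ to $0$.

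There is no serious obstacle here: the proof is essentially a bookkeeping exercise translating back and forth between $(r,t)$ and $(y,s)$ and identifying which pieces of the $H^1\times L^2$ convergence are used for each of the three quantities. The only minor point to keep in mind is that $\hat d_0\neq 0$ (and $N\geq 2$) is the meaningful regime, as noted in the remark after Theorem \ref{Thm}, to ensure the denominators in the equivalences do not vanish.
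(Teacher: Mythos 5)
Your proof is correct and follows exactly the route the paper intends: the paper omits the proof of Corollary \ref{equiv initial variable}, stating only that it follows from the change of variables \eqref{similarity variables} together with Corollary \ref{cor H1L2 conv}, and your computation (identifying $\uu(\dhz,0,r_0,T_0)$ with $(T_0-t)^{-2/(p-1)}\kappa(\dhz,y)$, changing variables so the powers of $T_0-t$ cancel, and matching each quantity to the relevant piece of the $H^1\times L^2$ convergence) is precisely that straightforward unwinding. The caveat you note about $\dhz\neq 0$ for the nondegeneracy of the denominators is consistent with the remark following Theorem \ref{Thm}.
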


From the classification of the blow-up behavior outside the origin for radial solutions given in \cite{MZ2011BSM}, we have the following corollary:
\begin{cor}[Differentiability and stability with repect to the blow-up point]\label{cor stability 1}
Under the hypotheses of Theorem \ref{Thm}, the following holds:
    \begin{itemize}
        \item \textbf{(Differentiability of $r\mapsto T(r)$)} There exists $\varepsilon>0$, such that the curve $T$ is of class $C^1$ in $[r_0-\varepsilon,r_0+\varepsilon]$. Moreover, $\dhz=T'(r_0)$.
        \item \textbf{(Blow-up behavior near $r_0$)} For all $r_1\in [r_0-\varepsilon,r_0+\varepsilon]$, we have that
        \begin{equation}
        \left|\left|\binom{w_{r_1,T(r_1)}(s)}{\partial_s w_{r_1,T(r_1)}(s)}-\binom{\kappa(T'(r_1))}{0}\right|\right|_{H^1(-1,1)\times L^2(-1,1)}\underset{s\rightarrow +\infty}{\longrightarrow}    0.
        \end{equation}
    \end{itemize}
\end{cor}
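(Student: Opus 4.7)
My plan is to bootstrap from Theorem~\ref{Thm} and Corollary~\ref{cor H1L2 conv} by invoking the classification of blow-up behavior outside the origin for radial solutions established in \cite{MZ2011BSM}, which does essentially all of the work; the corollary is really just an assembly statement.

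First I would record that $r_0 \in \mathcal{R}$ by Theorem~\ref{Thm}, and note that $\mathcal{R}$ is open as a direct consequence of the 1-Lipschitz property of $T$ together with the cone definition \eqref{def non charac}: if parameters $(\delta_0, t_0)$ witness non-characteristicity at $r_0$, then after harmless perturbation the same parameters serve any $r$ sufficiently close. Shrinking the resulting neighborhood if needed so that $r_0 - \varepsilon > 0$, I obtain $\varepsilon > 0$ with $[r_0-\varepsilon, r_0+\varepsilon] \subset \mathcal{R}$.

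Next I would invoke the classification of \cite{MZ2011BSM}: at every non-characteristic point $r_1 > 0$ of a radial blow-up solution, there exists $d(r_1) \in (-1,1)$ such that
\begin{equation*}
\left\|\binom{w_{r_1,T(r_1)}(s)}{\partial_s w_{r_1,T(r_1)}(s)} - \binom{\kappa(d(r_1))}{0}\right\|_{H^1(-1,1) \times L^2(-1,1)} \longrightarrow 0 \text{ as } s \to +\infty,
\end{equation*}
and moreover $T \in C^1$ on a neighborhood of any non-characteristic point with $T'(r_1) = d(r_1)$. Applying this at every $r_1 \in [r_0-\varepsilon, r_0+\varepsilon]$ delivers the differentiability statement and the second bullet in one stroke. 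Specializing to $r_1 = r_0$ and comparing with Corollary~\ref{cor H1L2 conv}, uniqueness of the limit in $H^1 \times L^2$ forces $\kappa(d(r_0)) = \kappa(\hat{d}_0)$, hence $d(r_0) = \hat{d}_0$ and $T'(r_0) = \hat{d}_0$.

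The only point requiring attention is that our constructed solution satisfies the structural hypotheses of \cite{MZ2011BSM} (a radial blow-up solution of \eqref{eq radial}, non-characteristic on the relevant interval), which is transparent from Theorem~\ref{Thm} combined with the openness argument above. I do not anticipate any genuine analytic obstacle here; the heavy lifting is entirely contained in the cited classification, and the novelty of the present paper is upstream, in furnishing initial data whose non-characteristic status at $r_0$ can actually be verified.
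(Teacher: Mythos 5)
Your proposal is correct and follows essentially the same route as the paper, which explicitly omits the proof and states that the corollary follows directly from the classification of radial blow-up behavior outside the origin in \cite{MZ2011BSM}. Your added details (openness of $\mathcal{R}$, identification of $T'(r_0)=\dhz$ via uniqueness of the limit from Corollary \ref{cor H1L2 conv}) are exactly the assembly steps the authors leave implicit.
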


Merle and Zaag in \cite{MZ2011BSM} proved the stability of the profile \eqref{conv in H} in the radial setting with respect to the initial data. This gives us the stability of the solution constructed in Theorem \ref{Thm}. indeed:

%With a careful reading of the proof of Theorem 1 in \cite{MZ2016} and the proof of Theorem 2 in \cite{MZ2015CMP}, one may see that the two results hold also for radial symmetry case. Therefore, combining the two results, we obtain the following corollary:

%\begin{cor}[Stability in the radial setting]
   % Under the hypotheses of Theorem \ref{Thm}, there exist $\varepsilon_0, \bar \varepsilon>0$, $K_0,\mu_0>0$ such that for all $(u_0,u_1)$ such that $||(u_0,u_1)-(\hat u_0,\hat u_1)||_{H^1\times L^2}\leq \varepsilon_0$, the solution
% $u(r,t)$ of \eqref{eq radial} with initial data $(u_0,u_1)$ blows up in finite time with a blow-up curve $r\rightarrow \bar T(r)$, and 
% \begin{equation}
        %\left|\left|\binom{w_{r_0,\bar T(r_0)}(s)}{\partial_s w_{r_0,\bar T(r_0)}(s)}-\binom{\kappa(\bar T'(r_0))}{0}\right|\right|_{H^1(-1,1)\times L^2(-1,1)}\leq K_0\bar \varepsilon e^{-\mu_0 s},
        %\end{equation}
%and 
%$$| \arg \tanh|T'(r_0)|-\arg \tanh|\bar T'(r_0)|| + \frac{|T'(r_0)-\bar %T'(r_0)|}{\sqrt{1+|\bar T'(r_0)|}}\leq  K_0\bar \varepsilon .$$
%\end{cor}

\begin{cor}[Stability with respect to the initial data in the radial setting]\label{cor stability 2}

   Let us denote by $\bar u$, the solution constructed in Theorem \ref{Thm} and a bar on all the items associated with $\bar u$. Under the hypotheses of Theorem \ref{Thm}, there exist $ \eta>0$, $K_0>0$ such that for all $(U_0,U_1)$ radial initial data such that $||(U_0,U_1)-(\bar U_0,\bar U_1)||_{H^1\times L^2(\R^N)}\leq \varepsilon_0$, the solution
 $u(r,t)$ of \eqref{eq radial} with initial data $(u_0,u_1)$ blows up in finite time with a blow-up curve $r\rightarrow  T(r)$, and 
 \begin{equation}
        \left|\left|\binom{w_{r_0, T(r_0)}(s)}{\partial_s w_{r_0, T(r_0)}(s)}-\binom{\kappa( T'(r_0))}{0}\right|\right|_{H^1\times L^2(-1,1)}\underset{s\rightarrow +\infty}{\longrightarrow}    0
        \end{equation}
and 
$$| \arg \tanh|T'(r_0)|-\arg \tanh|\bar T'(r_0)|| + \frac{|T'(r_0)-\bar T'(r_0)|}{\sqrt{1+|\bar T'(r_0)|}}\leq  K_0\eta .$$
\end{cor}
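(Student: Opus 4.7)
The plan is to apply directly the stability theorem for radial blow-up solutions of Merle and Zaag from \cite{MZ2011BSM}, taking as reference the solution $\bar u$ constructed in Theorem \ref{Thm}. First I would verify that $\bar u$ meets the hypotheses of that stability theorem: by Theorem \ref{Thm}, $\bar u$ is radial, blows up in finite time, $r_0 \in \bar{\mathcal R}$, and $(\bar w(s), \partial_s \bar w(s))$ converges exponentially in $\mathcal H$ to $(\kappa(\bar\dhz), 0)$ (and hence also in $H^1 \times L^2(-1,1)$ by Corollary \ref{cor H1L2 conv}). Moreover, by Corollary \ref{cor stability 1}, the blow-up curve $\bar T$ is $C^1$ near $r_0$ with $\bar T'(r_0) = \bar\dhz$. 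These are precisely the ingredients required as input by the stability theorem of \cite{MZ2011BSM}.

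Next I would invoke that stability theorem. It furnishes $\varepsilon_0>0$ (and the constant $K_0>0$) such that, for every radial initial datum $(U_0,U_1)$ with $\|(U_0,U_1)-(\bar U_0,\bar U_1)\|_{H^1\times L^2(\R^N)}\le \varepsilon_0$, the associated solution $u$ still blows up in finite time with a $1$-Lipschitz blow-up curve $r\mapsto T(r)$; the point $r_0$ remains non-characteristic for $u$; the curve $T$ is differentiable at $r_0$; and $(w_{r_0,T(r_0)}(s),\partial_s w_{r_0,T(r_0)}(s))$ converges in $H^1\times L^2(-1,1)$ to $(\kappa(T'(r_0)),0)$ as $s\to+\infty$. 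This yields immediately the first display of the corollary.

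The quantitative bound on the slopes is the continuous-dependence component of the same statement in \cite{MZ2011BSM}. The rapidity $\arg\tanh d$ is the natural Lorentz-invariant coordinate on the admissible set $(-1,1)$ of slopes, since each soliton $\kappa(d)$ is obtained from $\kappa(0)$ by a Lorentz boost of speed $d$; the weight $\sqrt{1+|\bar T'(r_0)|}$ in the second term reflects the standard degeneration of the $H^1\times L^2$-distance between two solitons as $|d|\to 1$. The main obstacle, and really the only one, is the bookkeeping of constants: one must track, through the proof in \cite{MZ2011BSM}, that the quantitative Lipschitz dependence of $T'(r_0)$ on the initial data takes exactly the stated form, bounded by $K_0\eta$. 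Beyond this, no new analysis is required, and the corollary follows by combining Theorem \ref{Thm}, Corollary \ref{cor stability 1}, and the stability result of \cite{MZ2011BSM}.
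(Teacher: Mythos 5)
Your proposal is correct and takes essentially the same route as the paper: the authors explicitly decline to give a proof, stating that the corollary follows in a straightforward way from the stability result of Merle and Zaag in \cite{MZ2011BSM} applied to the solution $\bar u$ of Theorem \ref{Thm}, which is exactly what you do. Your verification of the hypotheses (via Theorem \ref{Thm}, Corollaries \ref{cor H1L2 conv} and \ref{cor stability 1}) is if anything more explicit than the paper's one-line justification.
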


\begin{remark}
    Here $U_0(x)=u_0(|x|)$ as in \eqref{radial solution} and the same for $U,\bar U_0$ and $\bar U$.
\end{remark}
\begin{remark}
    In fact, Merle and Zaag in \cite{MZ2015CMP} proved the stability in the general case, even in the non-radial setting. This implies that the constructed solution is stable with respect to non-radial perturbation in initial data. In order to avoid introduction of more notations, we invite the reader to guess the non-radial case of corollary \ref{cor stability 2}, simply by checking \cite{MZ2015CMP}.
\end{remark}

This paper is organized as the following:\\
\textbf{Section \ref{setting prob}:} We reformulate the goal of this work in similarity variables and introduce some useful tools for the proof.\\
\textbf{Section \ref{section dyn in sim var}:} We obtain an exponential bound of the difference between the solution of the vector-valued version of \eqref{eq similary} and the generalized soliton thanks to some modulation technique and energy estimates.\\
\textbf{Section \ref{section shrink set}:} We prove estimate \eqref{conv in H} by trapping the parameter $d, \nu$ and the difference between the solution and the generalized soliton in some shrinking set.\\
\textbf{Section \ref{proof of thm}:} We conclude the proof of Theorem \ref{Thm} by obtaining the initial data \eqref{initial data u} in the initial variables. Note that we don't give the proof of corollaries \ref{cor H1L2 conv}, \ref{equiv initial variable}, \ref{cor stability 1} and \ref{cor stability 2}, as they follow in a straightforward way from earlier litarature.

\textbf{Acknowledgement:} Hatem Zaag wishes to thank Pierre Raphaël and the ”SWAT” ERC project for their support. Moreover, the authors would like to express their gratitude to the referee for the effort they dedicated to reviewing the article and providing valuable suggestions which contributed to the improvement of this paper.
\section{Setting of the problem}\label{setting prob}
Given $r_0>0$ and $\dhz\in (-1,1)$, we aim at constructing a solution $w$ to the Cauchy problem of equation \eqref{eq similary}, such that 
\begin{equation}\label{prp conv}
\left|\left|\binom{w(s)}{\partial_s w(s)}-\binom{\kappa(\dhz)}{0}\right|\right|_{\mathcal{H}}\longrightarrow 0 \text{ as } s \rightarrow +\infty.
\end{equation}
For that aim, we will take initial data

\begin{equation}\label{initial data soliton}
\binom{w(y,s_0)}{\partial_s w(y,s_0)}=\kappa^*(\dz, \nz ,y),
\end{equation}
where $s_0> -\log r_0$ will be fixed large enough later, $|\dhz-\dz|\leq A e^{-s_0}$, and $|\nz|\leq A e^{-s_0}$, and the constant $A>0$ will be fixed later. The aim of Sections \ref{section dyn in sim var} and \ref{section shrink set} is to prove the following:

\begin{prp}[Solution converging to a soliton in the $w(y,s)$ setting]\label{sol conv solition}
There exists $\delta\in (0,1)$, for all $r_0>0$ and $\dhz\in (-1,1)$, there exist $s_0> -\log r_0$, $\nz, \dz \in \R $ such that equation \eqref{eq similary} with the initial data given by \eqref{initial data soliton} is defined for all $(y,s)\in (-1,1)\times [s_0,\infty)$ and satisfies $(w(s),\partial_s w(s))\in \mathcal{H}$, and there exists $C>0$ such that
\begin{equation}\label{profile similary variables}
\left|\left|\binom{w(s)}{\partial_s w(s)}-\binom{\kappa(\dhz)}{0}\right|\right|_{\mathcal{H}}\leq Ce^{-\delta(s-s_0)-s_0}.    
\end{equation}

\end{prp}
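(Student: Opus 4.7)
I would prove Proposition \ref{sol conv solition} by a modulation argument combined with the energy estimates from \cite{MZ2007}, treating the additional radial term $e^{-s}\frac{N-1}{r_0+ye^{-s}}\partial_y w$ as a small (order $e^{-s}$) perturbation of the one-dimensional dynamics around the soliton manifold. First, I would decompose the solution as
\begin{equation*}
\binom{w(y,s)}{\partial_s w(y,s)} = \kappa^*(d(s),\nu(s),y) + q(y,s),
\end{equation*}
where $(d(s),\nu(s))$ are modulation parameters required to enforce two orthogonality conditions (with respect to the inner product $\phi$) on $q$ against the tangent vectors $\partial_d\kappa^*$ and $\partial_\nu\kappa^*$ to the soliton manifold. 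These two conditions absorb respectively the kernel direction and the unstable eigenmode of the linearization of the vector-valued form of \eqref{eq similary} around $\kappa^*(d,\nu)$. At the initial time, since $w(s_0)=\kappa^*(\dz,\nz)$ by \eqref{initial data soliton}, I would set $(d(s_0),\nu(s_0))=(\dz,\nz)$ and $q(s_0)=0$.

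Substituting into \eqref{eq similary} yields an evolution of the form
\begin{equation*}
\partial_s q = L_d\, q + \dot d\, \partial_d\kappa^* + \dot\nu\, \partial_\nu\kappa^* + F(q,d,\nu) + R(y,s,d,\nu,q),
\end{equation*}
where $L_d$ is the one-dimensional linearized operator, $F$ is quadratic-and-higher in $q$, and $R$ is the radial source. For $s\geq s_0> -\log r_0$, the coefficient $\frac{N-1}{r_0+ye^{-s}}$ stays uniformly bounded on $(-1,1)$, so $R=O(e^{-s})$. I would then adapt the energy functional $E(q)\sim\|(q,\partial_s q)\|_\mathcal{H}^2$ of \cite{MZ2007} and derive, using the orthogonality conditions to recover a spectral gap $\delta\in(0,1)$,
\begin{equation*}
\frac{d}{ds}E(q) \leq -2\delta E(q) + C e^{-s}\sqrt{E(q)} + \text{higher order in } q.
\end{equation*}
Since $\delta<1$, Gronwall's inequality produces $\sqrt{E(q(s))}\leq C e^{-\delta(s-s_0)-s_0}$, which is exactly \eqref{profile similary variables}. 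Simultaneously, projecting the $q$-equation onto $\partial_d\kappa^*$ and $\partial_\nu\kappa^*$ yields modulation ODEs of the shape $|\dot d|+|\dot\nu|\lesssim e^{-s}+\sqrt{E(q)}$, which guarantee that $(d(s),\nu(s))$ converges to some limit $(d_\infty,\nu_\infty)$.

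The last step is matching $d_\infty=\dhz$. I would view the map $(\dz,\nz)\mapsto (d_\infty,\nu_\infty)$ as a small $C^1$-perturbation of the identity near $(\dhz,0)$ (the perturbation is controlled by the $e^{-s_0}$ bound on $R$ and on the trajectory), and invoke a Brouwer-type/implicit function argument, similar to the topological shooting used in \cite{MZ2007}, to produce $(\dz,\nz)$ with $|\dz-\dhz|+|\nz|=O(e^{-s_0})$ such that $d_\infty=\dhz,\ \nu_\infty=0$. Together with the energy decay, this provides full convergence to the stationary soliton $\kappa(\dhz)$ with no residual $\nu$-component, giving \eqref{profile similary variables}.

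\textbf{Main obstacle.} The delicate point is controlling the contribution of the radial source $R$ in the energy identity. Since $R$ contains $\partial_y w$ and the weight $\rho(y)=(1-y^2)^{2/(p-1)}$ degenerates at $y=\pm 1$, the $\mathcal{H}$-norm controls $\partial_y q$ only in a weighted $L^2$ sense, so pairings involving $R$ must be arranged so that the smoothness and interior nature of $\frac{N-1}{r_0+ye^{-s}}$ (uniformly bounded in $y\in(-1,1)$ once $s\geq s_0>-\log r_0$) compensates for the absence of boundary control. One also has to check that integrating $R$ against the test function used to produce the energy functional of \cite{MZ2007} does not destroy the spectral gap $\delta$ and preserves the sign structure that yields coercivity; this is precisely where the radial setting diverges from the one-dimensional treatment.
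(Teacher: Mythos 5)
Your overall skeleton (modulation around $\kappa^*(d,\nu)$, energy estimates in the style of \cite{MZ2007}, treatment of the radial term as an $O(e^{-s})$ source thanks to $s_0>-\log r_0$) matches the paper's strategy, and your ``main obstacle'' paragraph correctly isolates the new difficulty. There is, however, a genuine gap in your treatment of the parameter $\nu$. You claim the modulation ODEs have the shape $|\dot d|+|\dot\nu|\lesssim e^{-s}+\sqrt{E(q)}$ and deduce that $(d(s),\nu(s))$ converges to a limit. This is false for $\nu$: since $\overline{\kappa^*}(d,\mu,y,s)=\kappa^*(d,\mu e^{s},y)$ is an exact solution of the one-dimensional flow, what the projection of the $q$-equation controls is $\nu'-\nu$, not $\nu'$; the paper's estimate \eqref{diff inequality xi nu d original} bounds $\frac{|d'|+|\nu'-\nu|}{1-d^{*2}}$. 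Imposing orthogonality removes the $\lambda=1$ unstable mode from $q$ but transfers it to the parameter, whose dynamics $\nu'=\nu+O(\cdot)$ are exponentially expanding; for generic $(\dz,\nz)$ the quantity $\nu(s)$ escapes, so your limit map $(\dz,\nz)\mapsto(d_\infty,\nu_\infty)$ is not even defined and the ``small perturbation of the identity / implicit function'' step cannot be run as stated. Relatedly, your Gronwall bound is only a bootstrap: the modulation decomposition and the spectral-gap estimates are valid only while $\|q\|_{\mathcal{H}}$ and $|\nu|/(1-|d|)$ remain small (see \eqref{consequances modulation technique}), so the energy decay cannot be asserted on all of $[s_0,\infty)$ before the trapping is established.

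The paper closes this gap with a trapping-plus-degree argument rather than a limit-matching one: it introduces the shrinking set $V_{A,\dhz}(s)$ of \eqref{ineq V_A,d_0} (all three of $|\nu|$, $|d-\dhz|$ and $\|q\|_{\mathcal{H}}^2$ bounded by $Ae^{-\delta(s-s_0)-s_0}$), shows the $q$-component can never cause exit (Proposition \ref{Dynamics of the parameters} with $A>C^*$), proves that at any exit time the flow crosses the $(d,\nu)$-part of the boundary transversally outgoing (for $d$ because $|d'|$ is smaller than the speed of the shrinking boundary once $A$ is large; for $\nu$ because of the genuine instability $\nu'\approx\nu$), and then derives a contradiction from the non-existence of a continuous map $\Phi$ from the square $\Omega$ of admissible $(\nz,\dz)$ onto $\partial[-1,1]^2$ restricting to the identity on $\partial\Omega$. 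The convergence $d\to\dhz$, $\nu\to 0$ then comes for free from membership in $V_{A,\dhz}(s)$, and the final estimate follows by combining the bound on $\|q\|_{\mathcal{H}}$ with the continuity Lemma \ref{continuity kappa*} applied to $\|\kappa^*(d,\nu)-\kappa^*(\dhz,0)\|_{\mathcal{H}}$. If you replace your convergence-of-$(d,\nu)$ step by this construction, your outline becomes essentially the paper's proof.
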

A solution of equation \eqref{eq similary} with initial data \eqref{initial data soliton} will be denoted by $w(s_0,\dz,\nz,y,s)$, or, when there is no ambiguity, by $w(y,s)$ or $w(s)$ for short. We define the vector-valued function $q=(q_1,q_2)$ as the following

\begin{equation}\label{q}
\binom{w(y,s)}{\partial_s w(y,s)}=\kappa^*(d(s),\nu(s),y)+q(y,s),
\end{equation}
where $d$ and $\nu$ will be chosen later through a modulation technique. 

We define as in \cite[Lemma 4.4]{MZ2007} and \cite[Lemma 2.4]{MZ2016}, for all $|d| < 1$ and $\lambda\in \{0, 1\}$,  $W_\lambda^d \in \mathcal{H}$ continuous in terms of $d$ by
\begin{equation}\label{def W_lambda,2}
W_{1,2}^d(y)=c_1(d) \frac{1-y^2}{(1+d y)^{\frac{2}{p-1}+1}}, \quad W_{0,2}^d(y)=c_0(d) \frac{y+d}{(1+d y)^{\frac{2}{p-1}+1}}.    
\end{equation}
$W_{\lambda, 1}^d$ is uniquely determined by the solution $r$ of the following equation
$$
-\mathcal{L} r+r=\left(\lambda-\frac{p+3}{p-1}\right) r_2-2 y r_2^{\prime}+\frac{8}{p-1} \frac{r_2}{1-y^2},
$$
with $r_2=W_{\lambda, 2}^d$ and the $C^1$ function $c_\lambda(d)>0$ are given by the following
$$c_\lambda(d)=c_\lambda(1-d^2)^{\frac{1}{p-1}},\text{ with } \frac{1}{c_\lambda}=2(\lambda+\frac{2 }{p-1})\int_{-1}^1 \left(\frac{y^2}{1-y^2}\right)^{1-\lambda}\rho(y) dy.$$
Note that $W_\lambda^d$ are eigenfunctions of $L^*_{d,0}$, the adjoint operator of $L_{d,0}$ given below in \eqref{def L_d,nu}. This operator is important for the dynamics of the equation satisfied by $q$ (see \eqref{eq q}). Since $\lambda\geq 0$, the control of the projection of $q$ on the eigenfunctions of $L_{d,\nu}$ is delicate. As a matter of fact, the eignefunctions $W_d^\lambda$ are useful to define those projections as follows: For $\lambda=0,1$ and for any $v\in \mathcal{H}$.
\begin{equation}\label{def pi}
    \pi^d_\lambda(v)=\phi(W^d_\lambda,v),
\end{equation}
where $\phi$ is defined \eqref{def phi}.

%\begin{remark}
%    If the reader is interested in knowing why we use the projection defined by $\phi$ on the eigenspaces associated to $\lambda=0,1$, we give detailed explanations in Appendices \ref{Non-negative directions of L^d} and \ref{The conjugate operator L^*_d}
%\end{remark}

\section{Dynamics in similarity variables}\label{section dyn in sim var}

The aim of this section is to obtain an exponential bound of $q$. This will be achieved in two steps, given in two separate subsections:

We first kill nonnegative modes of the linear operator, given in \eqref{Ld* and V}, around the soliton. To do such a thing, we need to choose the correct parameters in \eqref{q}, using a modulation technique.

Then, we project the equation satisfied by $q$ on the nonnegative directions and use energy estimates to derive the exponential bound on $q$.
\subsection{Modulation theory}
Here, we impose some orthogonality conditions related to nonnegative direction of the linearized operator of equation \eqref{eq q} satisfied by $q$. The following modulation technique from Merle and Zaag in \cite{MZ2012} allows us to do such a thing.
\begin{prp}[Modulation technique]\label{modulation technique}
For all $B\geq 1$, there exists $\varepsilon_0=\varepsilon_0(B)>0$, such that for all $\varepsilon\leq \varepsilon_0$, if $v\in \mathcal{H}$ and for $(\hat{d},\hat{\nu})\in (-1,1)\times \R$, such that
$$-1+\frac{1}{B}\leq \frac{\hat{\nu}}{1-|\hat{d}|}\leq B,\ ||\hat{q}||_{\mathcal{H}}\leq \varepsilon,$$
where
$$\hat{q}=v-\kappa^*(\hat{d},\hat{\nu})\ \text{ and } \ \hat{d^*}=\frac{\hat{d}}{1+\hat{\nu}}=-\tanh \hat{\xi^*}.$$
Then, there exists $(d,\nu)\in (-1,1)\times \R$ such that
\begin{enumerate}[label=(\roman*)]
    \item for $l=0, 1$, $\pi^{d^*}_l(q)=0$, where $q=v-\kappa^*(d,\nu)$.
    \item $\left| \frac{\nu}{1-|d|}-\frac{\hat{\nu}}{1-|\hat{d}|}\right|+|\xi^*-\hat{\xi^*}|\leq C(B) ||\hat{q}||_{\mathcal{H}}\leq C(B)\varepsilon$.
    \item $-1+\frac{1}{2B}\leq \frac{\nu}{1-|d|}\leq B+1$ and $||q||_{\mathcal{H}}\leq \varepsilon$.
\end{enumerate}
where $d^*=\frac{d}{1+\nu}=-\tanh\ \xi^*$.
\end{prp}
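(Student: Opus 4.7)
This is a standard parameter adjustment lemma, and the natural tool is the implicit function theorem applied to the two-dimensional map
\begin{equation}
\Phi(d,\nu) := \bigl(\pi^{d^*}_0(v-\kappa^*(d,\nu)),\,\pi^{d^*}_1(v-\kappa^*(d,\nu))\bigr),\qquad d^* = \frac{d}{1+\nu},
\end{equation}
defined on a neighborhood of $(\hat d, \hat\nu)$ in $\{(d,\nu):|d|<1,\ \nu>-1+|d|\}$. Condition (i) is exactly $\Phi(d,\nu)=0$, and at the initial guess, linearity of $\pi^{d^*}_l$ in its argument and Cauchy--Schwarz in the $\phi$ inner product give $|\Phi(\hat d,\hat \nu)| \leq C(B)\|\hat q\|_{\mathcal H}\leq C(B)\varepsilon$; the bound $\|W^{\hat d^*}_l\|_{\mathcal H} \leq C(B)$ comes from the explicit formulas \eqref{def W_lambda,2} together with the normalization $c_\lambda(d) = c_\lambda(1-d^2)^{1/(p-1)}$.

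The crux is computing and inverting $D\Phi(\hat d,\hat \nu)$. By the chain rule, for $\alpha\in\{d,\nu\}$,
\begin{equation}
\partial_\alpha \Phi_l(d,\nu) = -\phi\bigl(W^{d^*}_l,\,\partial_\alpha\kappa^*(d,\nu)\bigr) + (\partial_\alpha d^*)\,\phi\bigl(\partial_{d^*}W^{d^*}_l,\,v-\kappa^*(d,\nu)\bigr).
\end{equation}
At $(\hat d,\hat\nu)$ the second term is $O(\|\hat q\|_{\mathcal H})=O(\varepsilon)$ since it pairs with $\hat q$, so $D\Phi(\hat d,\hat\nu) = -M(\hat d,\hat\nu)+O(\varepsilon)$, where $M_{l\alpha} = \phi(W^{\hat d^*}_l,\partial_\alpha\kappa^*(\hat d,\hat\nu))$. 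The main obstacle is a quantitative non-degeneracy estimate $|\det M(\hat d,\hat\nu)| \geq c(B)>0$, uniform on the admissible compact region. This exploits the precise construction of $W^d_\lambda$: the second components $W^d_{\lambda,2}$ are the eigenfunctions of the one-dimensional linearized operator at $\kappa(d)$ associated with the $\lambda\in\{0,1\}$ eigenvalues (cf. \cite{MZ2007, MZ2016}), and $\partial_d\kappa^*|_{\nu=0}$, $\partial_\nu\kappa^*|_{\nu=0}$ are, up to harmless factors, the corresponding generalized eigenfunctions arising from translation and dilation. The normalization $c_\lambda(d)$ is tailored so that $M(\hat d,0)$ is diagonal with entries of order $1$, and extending continuously in $\hat\nu$ on $\hat\nu/(1-|\hat d|)\in[-1+1/B,B]$ gives the uniform lower bound. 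Choosing $\varepsilon_0(B)$ small enough that the $O(\varepsilon)$ perturbation is absorbed, the quantitative implicit function theorem (or a Banach contraction) yields a unique $(d,\nu)$ near $(\hat d,\hat\nu)$ with $\Phi(d,\nu)=0$ and $|d-\hat d|+|\nu-\hat\nu|\leq C(B)\|\hat q\|_{\mathcal H}$.

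The last step translates this into the natural coordinates $\xi^*=\argtanh(-d^*)$ and $\nu/(1-|d|)$. On the admissible region these are smooth functions of $(d,\nu)$ with Lipschitz constants depending only on $B$, which gives (ii). For (iii), the estimate $|\nu/(1-|d|)-\hat\nu/(1-|\hat d|)|\leq C(B)\varepsilon$ together with the hypothesis on $\hat\nu/(1-|\hat d|)$ forces $-1+1/(2B)\leq \nu/(1-|d|)\leq B+1$ after further shrinking $\varepsilon_0$. Finally, the triangle inequality $\|q\|_{\mathcal H}\leq \|\hat q\|_{\mathcal H}+\|\kappa^*(\hat d,\hat\nu)-\kappa^*(d,\nu)\|_{\mathcal H}$ combined with the Lipschitz dependence of $\kappa^*$ on $(d,\nu)$ (again with constants controlled by $B$) yields $\|q\|_{\mathcal H}\leq \varepsilon$ for $\varepsilon_0$ small enough.
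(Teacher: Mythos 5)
The paper does not actually prove this proposition: it invokes \cite[Proposition 3.1]{MZ2012} and observes that the one-soliton case follows from the same implicit-function-theorem argument. Your sketch reconstructs that standard route, so the strategy is the right one, but two steps as written lose exactly the uniformity in $\hat d$ that the statement requires (both $\varepsilon_0$ and $C$ may depend only on $B$, even as $|\hat d|\to 1$). First, the invertibility of $M$: you argue that $M(\hat d,0)$ is diagonal of order one and then ``extend continuously in $\hat\nu$ on the admissible compact region''. But the admissible set $\{|\hat d|<1,\ \hat\nu/(1-|\hat d|)\in[-1+1/B,B]\}$ is not compact (it degenerates as $|\hat d|\to1$), and $\hat\nu/(1-|\hat d|)$ is allowed to be of size $B$, not small, so continuity from $\nu=0$ gives nothing there. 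The correct input is the explicit computation of the projections, recorded later in the paper as \eqref{proj diff kappa*}: $\pi^{d^*}_0(\partial_\nu \kappa^*)=0$ identically, the diagonal entries are comparable to $(1-d^{*2})^{-1}$, and the off-diagonal entry is $O((1-d^{*2})^{-1})$, valid for \emph{all} admissible $(d,\nu)$; this yields $\|M^{-1}\|\le C(B)(1-\hat d^{*2})$ uniformly.

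Second, the passage to conclusion (ii). The quantitative inverse function theorem with the above bound on $M^{-1}$ produces the weighted estimate $|d-\hat d|+|\nu-\hat\nu|\le C(B)(1-\hat d^{*2})\|\hat q\|_{\mathcal H}$, not merely the unweighted one you state. The weight is essential: the change of coordinates $(d,\nu)\mapsto(\xi^*,\nu/(1-|d|))$ is \emph{not} Lipschitz with a constant depending only on $B$ (for instance $\frac{d}{dx}\argtanh x=(1-x^2)^{-1}$ blows up as $|x|\to1$), so your final ``smooth translation'' step from $|d-\hat d|\le C(B)\varepsilon$ to $|\xi^*-\hat\xi^*|\le C(B)\varepsilon$ fails near $|\hat d|=1$, and with it the uniform choice of $\varepsilon_0(B)$. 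Carrying the factor $(1-\hat d^{*2})$ through the Newton iteration repairs both (ii) and (iii); this bookkeeping is the real content of the cited proof. (A minor further point: your triangle inequality gives $\|q\|_{\mathcal H}\le(1+C(B))\varepsilon$ rather than the stated $\|q\|_{\mathcal H}\le\varepsilon$; this is harmless for the paper, which only applies the proposition with $\hat q=0$, but it is not what item (iii) asserts.)
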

\begin{proof}
    In \cite[Proposition  3.1]{MZ2012}, the proof is given for functions near multisolitons. A careful reading allows to see that our present case, with only one soliton follows from the same proof.
\end{proof}
Taking $B=2$ and $s_0$ large enough, we can apply Proposition \ref{modulation technique} to $v=\binom{w(s_0)}{\partial_s w(s_0)}$, $\hat{d}=\dz$ and $\hat{\nu}=\nz$, introduced in \eqref{initial data soliton}. Therefore, $\hat q=0$. From the continuity of the flow associated with equation \eqref{eq similary} in $\mathcal{H}$, we have a maximal $\bar s=\bar s(s_0, \dz,\nz)$ such that the solution $w$ exists for all times $s\in [s_0,\bar s)$ and can be modulated in the sense that there exist $d=d(s)$ and $\nu=\nu(s)$, such that for all $s\in [s_0,\bar s)$
\begin{equation}\label{def w,kappa,d,nu,q}
\begin{split}
\binom{w(s,y)}{\partial_s w(s,y)}=\kappa^*(d(s),\nu(s),y)+q(s,y) \text{ with } \pi^{d^*(s)}_l(q(s))=0 \text{ for } l=0,1,
\end{split}    
\end{equation}
where  
\begin{equation}\label{def d*}
    d^*=\frac{d}{1+\nu},
\end{equation}
and
\begin{equation}\label{consequances modulation technique}
\frac{|\nu|}{1-|d|}\leq s_0^{-\frac{1}{4}},\ ||q(s)||_{\mathcal{H}}\leq s_0^{-1/2}.
\end{equation}
which lead us to the following
\begin{equation}\label{equiv d and d*}
    |d^*|<1 \text{ and }\frac{1}{2}\leq \frac{1-d^{*2}}{1-d^2}\leq 2, \text{ provided that $s_0$ is large enough.}
\end{equation}
In particular,
\begin{equation}\label{nu,d(s0)=nu0,d_0}
    (\nu(s_0),d(s_0))=(\nz,\dz).
\end{equation}
Then, two cases arise:
\begin{itemize}
    \item Either $\bar s=+\infty$.
    \item Or, $\bar s<+\infty$ and one of the symbols in \eqref{consequances modulation technique} is an $=$ at $\bar s$.
\end{itemize}
We see that controlling the solution $w\in \mathcal{H}$ is equivalent to controlling $q\in \mathcal{H}, d(s)\in (-1,1)$ and $\nu(s) \in \R$.
\subsection{Energy estimates}

Here we derive an exponential bound on $q$, using energy estimates.
\begin{prp}[An exponential bound]\label{Dynamics of the parameters}
There exist $\delta\in (0,1)$ and $C^*>0$ such that for $s_0$ large enough and for all $s\in [s_0,\bar s)$, we have
$$||q(s)||^2_{\mathcal{H}}\leq C^*e^{-\delta(s- s_0)-s_0}.$$
\end{prp}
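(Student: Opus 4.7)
My plan is to set up a differential inequality for an energy functional equivalent to $\|q\|_{\mathcal{H}}^2$, following the 1D strategy of \cite{MZ2007} but carefully tracking the contribution of the radial perturbation $e^{-s}(N-1)/(r_0+ye^{-s})\partial_y w$, which is $O(e^{-s})$ uniformly for $|y|<1$ once $s_0>-\log r_0$.

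First I would derive the PDE for $q$. Plugging \eqref{def w,kappa,d,nu,q} into the first-order version of \eqref{eq similary}, and using the remark that $\kappa^*(d,\mu e^{s}, y)$ is an exact 1D solution, the result can be written schematically as
\begin{equation*}
\partial_s q = L_{d^*}\,q + R(q) - d'(s)\,\partial_d \kappa^*(d,\nu) - (\nu'(s)-\nu(s))\,\partial_\nu \kappa^*(d,\nu) + F_{\mathrm{rad}}(w,s),
\end{equation*}
where $L_{d^*}$ is the linearized operator around the soliton $\kappa(d^*)$ studied in \cite{MZ2007}, $R(q)$ collects the super-linear nonlinear terms, and $F_{\mathrm{rad}}$ is the radial contribution of size $O(e^{-s})$. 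The shift $\nu'(s)-\nu(s)$ rather than $\nu'(s)$ reflects that $\kappa^*(d,\mu e^{s})$ with constant $(d,\mu)$ already solves the 1D equation.

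Next I would derive the modulation equations. Differentiating $\pi^{d^*(s)}_l(q(s))=0$ in $s$ for $l=0,1$ and projecting the PDE above against $W^{d^*}_l$, I obtain a linear $2\times 2$ system for $d'(s)$ and $\nu'(s)-\nu(s)$, non-degenerate for small $\|q\|_{\mathcal{H}}$ and small $|\nu|/(1-|d|)$ thanks to the explicit computations of \cite{MZ2007}. This yields
\begin{equation*}
|d'(s)| + |\nu'(s)-\nu(s)| \leq C\|q(s)\|_{\mathcal{H}}^2 + Ce^{-s}\|q(s)\|_{\mathcal{H}} + Ce^{-s},
\end{equation*}
where the last summand comes from projecting $F_{\mathrm{rad}}$ against $W^{d^*}_l$.

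Then I would construct a Lyapunov functional $H(q)$ adapted from \cite[Section~4]{MZ2007}, equivalent to $\|q\|_{\mathcal{H}}^2$ on the codimension-two subspace $\{\pi^{d^*}_0(q)=\pi^{d^*}_1(q)=0\}$. Differentiating $H(q(s))$ along the flow and using the orthogonality together with the spectral gap for $L_{d^*}$ produces, as in the 1D case, a coercive estimate $\frac{d}{ds}H(q)\leq -2\delta_1 H(q)$ up to modulation and radial remainders. The main obstacle — and the whole point of the paper — is to control these radial remainders: the bound $|e^{-s}(N-1)/(r_0+ye^{-s})|\leq C(r_0)e^{-s}$ on $(-1,1)$, combined with integration by parts in $y$ against the weight $\rho$, yields a contribution of at most $Ce^{-s}\sqrt{H(q)}+Ce^{-2s}$ from $F_{\mathrm{rad}}$, and the $O(e^{-s})$ pieces of the modulation equations contribute similarly. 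Absorbing the cross term $Ce^{-s}\sqrt{H}$ by Young's inequality I reach $\frac{d}{ds}H(q(s)) \leq -2\delta\, H(q(s)) + Ce^{-2s}$ for some $\delta\in(0,1)$. Since $w(s_0)=\kappa^*(\dz,\nz)$, the modulation procedure returns $q(s_0)=0$ and hence $H(q(s_0))=0$, so Gronwall's lemma yields $H(q(s))\leq Ce^{-\delta(s-s_0)-2s_0}$ on $[s_0,\bar s)$, which by the equivalence $H(q)\sim \|q\|_{\mathcal{H}}^2$ implies the claimed bound with room to spare on the $s_0$ exponent.
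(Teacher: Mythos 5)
Your proposal follows essentially the same route as the paper: the same first-order equation for $q$ with the modulation terms $d'\partial_d\kappa^*$, $(\nu'-\nu)\partial_\nu\kappa^*$ and the $O(e^{-s})$ radial forcing, the same modulation estimate on $|d'|+|\nu'-\nu|$, the same Lyapunov functional $\tfrac12\varphi(q,q)-\int\mathcal{F}(q_1)\rho+\eta\int q_1q_2\rho$ borrowed from the one-dimensional analysis of \cite{MZ2007}, and a Gronwall integration starting from $q(s_0)=0$. The only (harmless) differences are that the paper settles for a $Ce^{-s}$ remainder where you keep $Ce^{-2s}$, and that the factor $\bigl(\int q_2^2\tfrac{\rho}{1-y^2}\bigr)^{1/2}$ produced by the radial term is not controlled by $\sqrt{H}$ and must instead be absorbed by the dissipative term $-\tfrac{4}{p-1}\int q_2^2\tfrac{\rho}{1-y^2}$, which is exactly the mechanism of the 1D machinery you invoke.
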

Before starting the proof of Proposition \ref{Dynamics of the parameters}, let us first derive an equation on $q$. Using the remark below \eqref{kappa*2} and the first order vector-valued formulation of \eqref{eq similary} in dimension one, we have
\begin{equation}\label{eq kappa * bar}
\nu\partial_\nu \kappa^*=\partial_s \overline{\kappa^*}=\binom{\kappa^*_2}{\mathcal{L}\kappa^*_1 - \frac{2(p+1)}{(p-1)^2}\kappa^*_1+\kappa^{*p}_1-\frac{p+3}{p-1}\kappa^*_2-2y\partial_{y}\kappa^*_2}.
\end{equation}
From \eqref{eq similary}, \eqref{q} and \eqref{eq kappa * bar}, the equation of $q$ is given by the following 
\begin{equation}\label{eq q}
\begin{split}
\partial_s \binom{q_1}{q_2}=&L_{d,\nu}\binom{q_1}{q_2}+\binom{0}{f_{d,\nu}(q_1)}-d'\partial_d \kappa^*(d,\nu)-(\nu'-\nu)\partial_\nu \kappa^*(d,\nu)\\
&+\binom{0}{e^{-s}\frac{(N-1)}{r_0+ye^{-s}}\partial_y (\kappa^*_1(d,\nu)+q_1)},
\end{split}
\end{equation}
where
\begin{equation}\label{def L_d,nu}
L_{d,\nu}\binom{q_1}{q_2}=\binom{q_2}{\mathcal{L}q_1+\psi_{d,\nu} q_1-\frac{p+3}{p-1}q_2-2y\partial_y q_2},
\end{equation}
with
\begin{equation}\label{def psi}
\psi_{d,\nu}=p\kappa^*_1(d,\nu)^{p-1}-\frac{2(p+1)}{(p-1)^2},
\end{equation}
and
\begin{equation}\label{def f nu,d}
f_{d,\nu}(q_1)=|\kappa^*_1(d,\nu)+q_1|^{p-1}(\kappa^*_1(d,\nu)+q_1)-\kappa^*_1(d,\nu)^p-p\kappa^*_1(d,\nu)^{p-1}q_1.
\end{equation}
We introduce
\begin{equation}\label{def varphi}
    \varphi(q,r)=\int^1_{-1}\left(q_1'r_1'(1-y^2)-\psi_{d,\nu} q_1r_1+q_2r_2\right)\rho dy,
\end{equation}
where $\psi_{d,\nu}$ is defined in \eqref{def psi}. We have the following useful results:

\begin{lemma}\label{lemma diff ineq para}
\begin{enumerate}[label=(\roman*)]
\item (Dynamics of the parameters) There exists $\bar C>0$ such that for $s_0$ large enough and for all $s\in [s_0,\bar s)$, we have

\begin{equation}\label{diff inequality xi nu d original}
    \frac{|d'|+|\nu'-\nu|}{1-d^{*2}}\leq \bar{C}\left(||q||^2_{\mathcal{H}}+||q||_{\mathcal{H}}\frac{|\nu|}{1-d^{*2}}+e^{-s}\right).
\end{equation}\label{dyn of the parameters}
\item We have the following estimate of $\varphi(q,q)$
\begin{equation}
        \left(\frac{1}{2}\varphi(q,q)+R\right)'\leq \frac{C}{s_0^{\frac{1}{4}}}  \varphi(q,q)+\left(Ce^{-s}-\frac{4}{p-1}\right)\int^1_{-1}q_2^2\frac{\rho}{1-y^2} d y+C e^{-s},
\end{equation}
where 
\begin{equation}\label{def A and R}
R=-\int^{1}_{-1} \mathcal{F}_{\nu,d}(q_1)dy, \text{ with }\mathcal{F}_{\nu,d}(q_1)=\int^{q_1}_0 f_{\nu,d}(\xi)d\ x,
\end{equation}
and $f_{\nu,d}$ is defined in \eqref{def f nu,d}. \label{diff 1/2 A+R estimate}
\item We have an additional estimate
\begin{equation}
\begin{split}
        \frac{d}{ds}\int^1_{-1}q_1q_2 \rho=&\ -\frac{9}{10}\varphi(q,q)+C\int^1_{-1}q_2^2\frac{\rho}{1-y^2}+ C e^{-s}.
\end{split}    
\end{equation}\label{additional estimate}
\end{enumerate}
\end{lemma}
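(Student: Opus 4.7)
The plan is to treat the three items in sequence. For item \ref{dyn of the parameters}, I would differentiate in $s$ the two orthogonality conditions $\pi^{d^*(s)}_l(q(s))=0$ for $l=0,1$, substitute \eqref{eq q} for $\partial_s q$, and collect terms. This produces a $2\times 2$ linear system in the unknowns $d'$ and $\nu'-\nu$, whose matrix is, up to smooth corrections, the Gram matrix of $(\partial_d\kappa^*,\partial_\nu\kappa^*)$ against $(W^{d^*}_0,W^{d^*}_1)$ in the $\phi$ inner product. As in the one-dimensional computation of \cite{MZ2012}, this matrix is approximately diagonal with entries of size $1-d^{*2}$, hence invertible with inverse of operator norm $O(1/(1-d^{*2}))$. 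The right-hand side then splits into a quadratic $f_{d,\nu}(q_1)$ contribution giving the $\|q\|_{\mathcal{H}}^2$ term; a cross term arising from the $s$-dependence of $W^{d^*}_l$ itself, responsible for the $\|q\|_{\mathcal{H}}|\nu|/(1-d^{*2})$ part; and the new radial correction $e^{-s}(N-1)/(r_0+ye^{-s})\partial_y(\kappa^*_1+q_1)$, uniformly bounded in the appropriate weighted $L^2$ since $s_0>-\log r_0$ forces $r_0+ye^{-s}\geq r_0/2$. Inverting the system yields \eqref{diff inequality xi nu d original}.

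For item \ref{diff 1/2 A+R estimate}, I would differentiate $\tfrac{1}{2}\varphi(q,q)$ directly from \eqref{def varphi} and \eqref{eq q}. Integration by parts against $\rho$ turns $\int q_2\,\mathcal{L}q_1\,\rho$ into $-\int q_1'q_2'(1-y^2)\rho$, which combines with the $\partial_s(q_1')^2(1-y^2)\rho$ piece via $\partial_sq_1=q_2$ to cancel at leading order. The drift $-\tfrac{p+3}{p-1}q_2-2y\partial_yq_2$, after using $(\rho(1-y^2))'=-\tfrac{4y}{p-1}\rho$, produces the coercive $-\tfrac{4}{p-1}\int q_2^2\frac{\rho}{1-y^2}dy$ contribution. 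The semilinear piece $f_{d,\nu}(q_1)q_2$ is precisely $-\tfrac{d}{ds}R$ up to modulation errors, which explains the choice of the Lyapunov-type quantity $\tfrac{1}{2}\varphi+R$. The modulation contributions $-d'\partial_d\kappa^*-(\nu'-\nu)\partial_\nu\kappa^*$ and the $s$-derivative of $\psi_{d,\nu}$ are bounded by item \ref{dyn of the parameters} together with \eqref{consequances modulation technique}, giving the $\tfrac{C}{s_0^{1/4}}\varphi(q,q)$ term. Finally, the radial correction is paired with $q_2$ via Cauchy--Schwarz with the weights $\sqrt{1-y^2}$ and $1/\sqrt{1-y^2}$, which, after an $\varepsilon$-absorption into the coercive term, leaves the desired $Ce^{-s}$ remainder.

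Item \ref{additional estimate} is a direct computation of $\tfrac{d}{ds}\int q_1q_2\rho=\int(\partial_sq_1)q_2\rho+\int q_1(\partial_sq_2)\rho$. The first integral contributes $\int q_2^2\rho$ plus modulation errors controlled by item \ref{dyn of the parameters}. In the second integral, integration by parts converts $\int q_1\mathcal{L}q_1\rho$ into $-\int(q_1')^2(1-y^2)\rho$, which together with $\int\psi_{d,\nu}q_1^2\rho$ reconstructs $-\varphi(q,q)+\int q_2^2\rho$. The remaining drift pieces, the nonlinear term, and the radial correction are handled by the same Cauchy--Schwarz split as above and by item \ref{dyn of the parameters}; the small excess of the $\int q_2^2\rho$ coefficient is absorbed using the strictly stronger weighted quantity $\int q_2^2\rho/(1-y^2)$, which is what produces the fractional coefficient $-9/10$ in front of $\varphi(q,q)$.

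The main obstacle throughout is the radial correction $e^{-s}(N-1)/(r_0+ye^{-s})\partial_y(\kappa^*_1+q_1)$. Although formally of order $e^{-s}$, it involves a plain $\partial_y q_1$ that is not controlled by $\|q\|_{\mathcal{H}}$ alone, since $\mathcal{H}$ weights $(q_1')^2$ by the vanishing factor $(1-y^2)$. The key is to split the weight $\rho\,dy$ as $\sqrt{(1-y^2)\rho}\cdot \rho/\sqrt{1-y^2}\,dy$, so that $\partial_y q_1$ is paired with a quantity bounded by $\|q\|_{\mathcal{H}}$ while $q_2$ is paired with the dissipative weight $\rho/(1-y^2)$; then the $e^{-s}$ prefactor is comfortably absorbed into the $Ce^{-s}$ remainder and an $\varepsilon$-fraction of the coercive term. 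This weighted bookkeeping is the only genuinely new analytical step compared to the one-dimensional proof of \cite{MZ2007}.
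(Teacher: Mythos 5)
Your proposal is correct and follows essentially the same route as the paper: item (i) by projecting equation \eqref{eq q} onto $W^{d^*}_0,W^{d^*}_1$ and inverting the (triangular, up to errors) modulation system, item (ii) via the energy identity for $\varphi$ corrected by $R$ with the coercive $-\tfrac{4}{p-1}\int q_2^2\tfrac{\rho}{1-y^2}$ term, item (iii) by direct computation, and in all three cases the radial term handled exactly by the weighted Cauchy--Schwarz split $\sqrt{(1-y^2)\rho}\cdot\rho/\sqrt{1-y^2}$ combined with the Hardy--Sobolev inequality and absorption into the dissipative term. The only slips are cosmetic: with the normalization \eqref{def W_lambda,2} the modulation matrix entries are of size $1/(1-d^{*2})$ (inverse of size $1-d^{*2}$), not the reverse, and the $\|q\|_{\mathcal{H}}|\nu|/(1-d^{*2})$ term is produced chiefly by the potential $V=p\bigl(\kappa_1^*(d,\nu)^{p-1}-\kappa(d^*)^{p-1}\bigr)$ rather than only by the $s$-dependence of $W^{d^*}_l$.
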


\begin{proof}
\textbf{Proof of \ref{dyn of the parameters}:} We project equation \eqref{eq q} with the projector $\pi^d_\lambda$ \eqref{def pi} for $\lambda = 0$ and $\lambda = 1$, we write
\begin{equation}\label{proj eq q}
\begin{split}
\pi^{d^*}_\lambda(\partial_s q)=&\pi^{d^*}_\lambda(L_{d,\nu}q)+\pi^{d^*}_\lambda \binom{0}{f_{d,\nu}(q_1)}-d'\pi^{d^*}_\lambda(\partial_d \kappa^*)+(\nu'-\nu)\pi^{d^*}_\lambda(\partial_\nu \kappa^*)\\
&+\pi^{d^*}_\lambda\binom{0}{e^{-s}\frac{(N-1)}{r_0+ye^{-s}}\partial_y (\kappa^*_1+q_1)}.     
\end{split}
\end{equation}

Defining $\alpha_\lambda=\pi^{d^*}_\lambda(q)$ for $\lambda=0,1$, where $\pi^{d^*}$ is defined in \eqref{def pi}, we know from \eqref{def w,kappa,d,nu,q} that for all $s\in [s_0,\bar s)$, $\alpha_\lambda(s)=0$, hence $\alpha'_\lambda(s)=0$. Together with \cite[(194)]{MZ2007}, we have
\begin{equation}\label{proj partial_s q}
    |\pi^{d^*}_\lambda(\partial_s q)-\alpha'_\lambda(s)|=|\pi^{d^*}_\lambda(\partial_s q)|\leq \frac{C}{1-d^{*2}}|d^{*'}|\ ||q||_{\mathcal{H}}.
\end{equation}
Next, we write \eqref{def L_d,nu} as
\begin{equation}\label{Ld* and V}
\begin{split}
L_{d,\nu} q&=\binom{q_2}{\mathcal{L}q_1+\psi_{d^*} q_1-\frac{p+3}{p-1}q_2-2y\partial_y q_2}+\binom{0}{p(\kappa^*_1(d,\nu)^{p-1} -\kappa(d^*)^{p-1}) q_1}\\
&=:L_{d^*}q+\binom{0}{V q_1},
\end{split}
\end{equation}
where \begin{equation}\label{def psi d}
\psi_{d^*}=p\kappa(d^*)^{p-1}-\frac{2(p+1)}{(p-1)^2}.
\end{equation}
From \cite[(195)]{MZ2007} and \cite[$(i)$ of Claim 3.2]{MZ2012}, we have
\begin{equation}\label{proj L_d}
    \pi^{d^*}_\lambda(L_{d^*} q)=\lambda \alpha_\lambda=0,
\end{equation}
\begin{equation}\label{proj diff kappa*}
\begin{matrix}
\pi^{d^*}_0(\partial_\nu \kappa^*)=0, & -\frac{C}{1-d^{*2}}\leq\pi^{d^*}_1(\partial_\nu \kappa^*) \leq -\frac{1}{C(1-d^{*2})},\\
-\frac{C}{1-d^{*2}}\leq\pi^{d^*}_0(\partial_d \kappa^*) \leq -\frac{1}{C(1-d^{*2})}, & |\pi^{d^*}_1(\partial_d \kappa^*)|\leq \frac{C}{1-d^{*2}}.
\end{matrix} 
\end{equation}
From similar analysis as in \cite[Proof of Lemma C.2, page 2900]{MZ2012} and \cite[(C.7)]{MZ2012E}, we obtain
\begin{equation}\label{proj V f}
\begin{split}
    &\left|\pi^{d^*}_\lambda \binom{0}{V q_1}\right|\leq \frac{C|\nu|}{1-d^{*2}}||q||_{\mathcal{H}},\\
    &\left|\pi^{d^*}_\lambda \binom{0}{f(q_1)}\right|\leq C||q||^2_\mathcal{H}.    
\end{split}
\end{equation}

Therefore, we see from definitions \eqref{def kappa} and \eqref{def W_lambda,2} of $\kappa(d^*)$ and $W_{\lambda,2}^{d^*}$ that $W_{\lambda,2}^{d^*}\leq C\kappa(d^*)$, Together with definition \eqref{def pi}, we obtain
\begin{equation}\label{proj exp term}
\begin{split}
\left| \pi^{d^*}_\lambda\binom{0}{e^{-s}\frac{(N-1)}{r_0+ye^{-s}}\partial_y (\kappa^*_1(d,\nu)+q_1)}\right|&=\left| (N-1)e^{-s}\int^{1}_{-1} W_{\lambda,2}^{d^*} \frac{1}{r_0+ye^{-s}}\partial_y (\kappa^*_1(d,\nu)+q_1)\rho\right|\\
&\leq \frac{Ce^{-s}}{r_0-e^{-s}}\left| \int^{1}_{-1} \kappa(d^*) \partial_y (\kappa^*_1(d,\nu)+q_1)\rho\right|.
\end{split}
\end{equation}
Using Cauchy-Schwarz inequality, Lemma \ref{estimates in different norms} and \eqref{def H}, we have for the second term
\begin{equation}\label{proj exp term 2}
\left| \int^{1}_{-1} \kappa(d^*) \partial_y q_1(d,\nu)\rho\right|\leq \left(\int^{1}_{-1} \kappa^2(d^*)\frac{\rho}{1-y^2}\right)^{\frac{1}{2}} \left(\int^{1}_{-1}(\partial_y q_1(d,\nu))^2(1-y^2)\rho\right)^{\frac{1}{2}}\leq C ||q||_{\mathcal{H}}.
\end{equation}
For the first term, with the same computations, we obtain
\begin{equation}\label{proj exp term 1}
\begin{split}
\left| \int^{1}_{-1} \kappa(d^*) \partial_y \kappa^*_1(d,\nu)\rho\right| \leq C ||\kappa^*(d,\nu)||_{\mathcal{H}}.
\end{split}
\end{equation}
Using \eqref{kappa* norm lambda} and \eqref{bound lambda-1}, we obtain the following bound on $\kappa^*(d,\nu)$ for $s_0$ is large enough
\begin{equation}\label{kappa* bound}
 ||\kappa^*(d,\nu)||_{\mathcal{H}}\leq C.   
\end{equation}
Combining this with \eqref{consequances modulation technique}, \eqref{proj exp term}, \eqref{proj exp term 2} and \eqref{proj exp term 1} we obtain  for all $s\in [s_0,\bar s)$,

\begin{equation}\label{proj exp}
\begin{split}
\left| \pi^{d^*}_\lambda\binom{0}{e^{-s}\frac{(N-1)}{r_0+ye^{-s}}\partial_y (\kappa^*_1(d,\nu)+q_1)}\right| \leq Ce^{-s}(1+||q||_{\mathcal{H}})\leq Ce^{-s}.
\end{split}
\end{equation}
Combining \eqref{proj eq q}, \eqref{proj partial_s q}, \eqref{Ld* and V} \eqref{proj L_d}, \eqref{proj diff kappa*}, \eqref{proj V f} and \eqref{proj exp}, we have for $\lambda=0$

\begin{equation}\label{d' inequality}
\frac{|d'|}{1-d^{*2}}\leq C\left(\frac{|d^{*'}|+|\nu|}{1-d^{*2}}||q||_{\mathcal{H}}+||q||_{\mathcal{H}}^2+e^{-s}\right).
\end{equation}
With the same computations but for $\lambda=1$ and with \eqref{d' inequality}, we have
\begin{equation}\label{proj lambda=1}
\frac{|\nu'-\nu|}{1-d^{*2}}\leq C\left(\frac{|d^{*'}|+|\nu|}{1-d^{*2}}||q||_{\mathcal{H}}+||q||_{\mathcal{H}}^2+e^{-s}\right).
\end{equation}

Using \eqref{def d*} and \eqref{equiv d and d*}, for $s_0$ large enough, we have the following
\begin{equation}\label{d*/1-d* bound}
   \frac{|d^{*'}|}{1-d^{*^2}}\leq C^*\frac{|d'|+|\nu-\nu'|+|\nu|}{1-d^{*2}}.
\end{equation}

Therefore combining \eqref{d' inequality}, \eqref{proj lambda=1}, \eqref{equiv d and d*}, \eqref{d*/1-d* bound}, with the \eqref{consequances modulation technique} for $s_0$, we obtain \eqref{diff inequality xi nu d original}, and concludes the proof of item \ref{dyn of the parameters}.

\bigskip
\textbf{Proof of \ref{diff 1/2 A+R estimate}:} One may see that with similar computation as in \cite[(C.25), (C.26) and (C.27)]{MZ2012} that we have
\begin{equation}
    \left|\frac{1}{2}\partial_s \varphi(q,q)-\varphi(\partial_s q, q)\right|\leq C\frac{|d'|+|\nu'|}{1-d^{*2}}||q||^2_{\mathcal{H}}.
\end{equation}
With \eqref{diff inequality xi nu d original}, we have
\begin{equation}\label{A-varphi estimate}
    \left|\frac{1}{2}\partial_s \varphi(q,q)-\varphi(\partial_s q, q)\right|\leq C\left(||q||^2_{\mathcal{H}}+(1+||q||_{\mathcal{H}})\frac{|\nu|}{1-d^{*2}}+e^{-s}\right)||q||^2_{\mathcal{H}}.
\end{equation}
Together with \eqref{consequances modulation technique} and \eqref{equiv d and d*}, taking $s_0$ large enough, we obtain
\begin{equation}\label{A-varphi estimate epsilon}
    \left|\frac{1}{2}\partial_s \varphi(q,q)-\varphi(\partial_s q, q)\right|\leq \frac{C}{s_0^{\frac{1}{4}}} ||q||^2_{\mathcal{H}}.
\end{equation}
Using \eqref{eq q}, we have that
\begin{equation}\label{varphi d^* eq q}
\begin{split}
\varphi(\partial_s q,q)=&\varphi(L_{d,\nu}q,q)+\varphi\left( \binom{0}{f_{d,\nu}(q_1)},q\right)-d'\varphi(\partial_d \kappa^*,q)-(\nu'-\nu)\varphi(\partial_\nu \kappa^*,q)\\
&+\varphi\left(\binom{0}{e^{-s}\frac{(N-1)}{r_0+ye^{-s}}\partial_y (\kappa^*_1+q_1)},q\right),  
\end{split}
\end{equation}
where $\varphi$ is defined in \eqref{def varphi}.
We have from \cite[(208)]{MZ2007}, the following estimate
\begin{equation}\label{varphi L d,nu estimate}
\varphi(L_{d,\nu}q,q)=-\frac{4}{p-1}\int^1_{-1}q_2^2\frac{\rho}{1-y^2} d y,
\end{equation}
With similar computations as in \cite[(C.29) and (C.31)]{MZ2012}, we obtain
\begin{equation}\label{varphi diff s kappa* and R'+f estimate}
\begin{split}
    &|d'\varphi(\partial_d \kappa^*,q)+(\nu'-\nu)\varphi(\partial_\nu \kappa^*,q)|\leq C\frac{|d'|+|\nu'-\nu|}{1-d^{*2}}||q||_{\mathcal{H}},\\
    &\left|R'+\varphi\left( \binom{0}{f_{d,\nu}(q_1)},q\right)\right|\leq C\frac{|d'|+|\nu'|+|\nu|}{1-d^*}||q||^2_{\mathcal{H}}.
    \end{split}
\end{equation}
With \eqref{diff inequality xi nu d original} and \eqref{consequances modulation technique}, we obtain by taking $s_0$ large enough
\begin{equation}\label{varphi diff s kappa* and R'+f estimate epsilon}
\begin{split}
    &|d'\varphi(\partial_d \kappa^*,q)+(\nu'-\nu)\varphi(\partial_\nu \kappa^*,q)|\leq \frac{C}{s_0^{\frac{1}{4}}}(||q||_{\mathcal{H}}^2+ e^{-s}),\\
    &\left|R'+\varphi\left( \binom{0}{f_{d,\nu}(q_1)},q\right)\right|\leq \frac{C}{s_0^{\frac{1}{4}}}||q||^2_{\mathcal{H}}.
    \end{split}
\end{equation}
For the last term, from the definition of $\varphi$ in \eqref{def varphi} and using \eqref{hardy-sobolev identity} \eqref{kappa* bound} and \eqref{consequances modulation technique}, we have
\begin{equation}\label{varphi extra term estimate epsilon}
\begin{split}
\left|\varphi\left(\binom{0}{e^{-s}\frac{(N-1)}{r_0+ye^{-s}}\partial_y (\kappa^*_1+q_1)},q\right)\right|& =(N-1)e^{-s} \left|\int^1_{-1}\frac{1}{r_0+ye^{-s}}\partial_y (\kappa^*_1+q_1)q_2 \rho\right|\\
&\leq e^{-s}\frac{(N-1)}{r_0-e^{-s}}\left(\left|\int^1_{-1}\partial_y \kappa^*_1 q_2 \rho\right|+ \left|\int^1_{-1}\partial_y q_1 q_2  \rho\right|\right)\\
&\leq Ce^{-s}\left(||\kappa^*_1 ||_{\mathcal{H}_0}+ ||q_1||_{\mathcal{H}_0} \right)\left(\int^1_{-1} q_2^2 \frac{\rho}{1-y^2}\right)^{\frac{1}{2}}\\
&\leq Ce^{-s}\left(1+ ||q||_{\mathcal{H}} \right)\left(1+\int^1_{-1} q_2^2 \frac{\rho}{1-y^2}\right).\\
&\leq Ce^{-s}\left(1+\int^1_{-1} q_2^2 \frac{\rho}{1-y^2}\right).
\end{split}
\end{equation}
Gathering the estimates \eqref{A-varphi estimate epsilon}, \eqref{varphi d^* eq q}, \eqref{varphi L d,nu estimate}, \eqref{varphi diff s kappa* and R'+f estimate epsilon}, and \eqref{varphi extra term estimate epsilon}, we have for $s_0$ sufficiently large that
\begin{equation}
\begin{split}
        \left(\frac{1}{2}\varphi(q,q)+R\right)'=&\frac{1}{2}\partial_s \varphi(q,q)-\varphi(\partial_s q,q)+\varphi(\partial_s q,q)+R'\\
        \leq &\frac{C}{s_0^{\frac{1}{4}}} ||q||_{\mathcal{H}}^2+\left(Ce^{-s}-\frac{4}{p-1}\right)\int^1_{-1}q_2^2\frac{\rho}{1-y^2} d y+C e^{-s}.
\end{split}
\end{equation}
with \eqref{ineq norm q2 & A}, we obtain \ref{diff 1/2 A+R estimate}.

\bigskip
\textbf{Proof of \ref{additional estimate}: } From \eqref{eq q}, we have
\begin{equation}\label{extra term ineq}
\begin{split}
        \frac{d}{ds}\int^1_{-1}q_1q_2 \rho=&\int^1_{-1}\partial_s q_1q_2 \rho+\int^1_{-1} q_1\partial_sq_2 \rho\\
        =&\int^1_{-1}q_2^2\rho-d'\int^1_{-1} \partial_d\kappa^*(d,\nu).(q_2,q_1)\rho -(\nu'-\nu)\int^1_{-1} \partial_\nu\kappa^*(d,\nu).(q_2,q_1)\rho\\
        &+ \int^1_{-1} q_1(\mathcal{L}q_1+\psi_{d,\nu} q_1-\frac{p+3}{p-1}q_2-2y\partial_y q_2)\rho+\int^1_{-1} q_1 f_{d,\nu}(q_1)\rho\\
        &+ e^{-s}\int^1_{-1}\frac{(N-1)}{r_0+ye^{-s}} q_1\partial_y (\kappa^*_1(d,\nu)+q_1)\rho,
\end{split}    
\end{equation}
where the dot “.” stands for the inner product coordinate by coordinate. As in \cite[page 2904]{MZ2012}, we have
\begin{equation}\label{extra term some ineq}
\begin{split}
        &\int^1_{-1}q_2^2\rho\leq \int^1_{-1}q_2^2\frac{\rho}{1-y^2},\\
        &\int^1_{-1} q_1(\mathcal{L}q_1+\psi_{d,\nu} q_1)\rho \leq -\varphi(q,q)+\int^1_{-1}q_2^2\frac{\rho}{1-y^2},\\
        &\left|\int^1_{-1} -\frac{p+3}{p-1}q_1q_2-2yq_1\partial_y q_2\rho\right| \leq \frac{1}{100}\varphi(q,q)+C\int^1_{-1}q_2^2\frac{\rho}{1-y^2},\\
        &\left|\int^1_{-1} q_1 f_{d,\nu}(q_1)\right| \leq \frac{1}{100}\varphi(q,q).
\end{split}    
\end{equation}
With similar computation as in \cite[page 2904]{MZ2012}, we have
\begin{equation}
            \left|d'\int^1_{-1} \partial_d\kappa^*(d,\nu).(q_2,q_1)\rho +(\nu'-\nu)\int^1_{-1} \partial_\nu\kappa^*(d,\nu).(q_2,q_1)\rho\right|\leq C\frac{||q||_{\mathcal{H}}}{1-d^{*2}}(|d'|+|\nu'-\nu|).
\end{equation}
Together with \eqref{diff inequality xi nu d original}, \eqref{consequances modulation technique}, \eqref{equiv d and d*} and \eqref{ineq norm q2 & A}, we obtain for $s_0$ sufficiently large

\begin{equation}\label{extra term kappa* ineq}
\begin{split}
            \left|d'\int^1_{-1} \partial_d\kappa^*(d,\nu).(q_2,q_1)\rho \right.&\left.+(\nu'-\nu)\int^1_{-1} \partial_\nu\kappa^*(d,\nu).(q_2,q_1)\rho \right| \\
            &\leq C||q||_{\mathcal{H}}^2\left(||q||_{\mathcal{H}}+\frac{|\nu|}{1-d^{*2}}\right)+Ce^{-s}||q||_{\mathcal{H}}\\
            &\leq C||q||_{\mathcal{H}}^2+Ce^{-s}\\
            &\leq \frac{1}{100}\varphi(q,q)+Ce^{-s}.
\end{split}
\end{equation}
For the last term, we proceed as the following

\begin{equation}\label{extra term exp ineq}
\begin{split}
    \left|e^{-s}\frac{(N-1)}{r_0+ye^{-s}}\int^1_{-1} q_1\partial_y (\kappa^*_1(d,\nu)+q_1)\rho\right|&\\
    \leq \frac{Ce^{-s}}{r_0-e^{-s}}\left(\left|\int^1_{-1} q_1\partial_y \kappa^*_1(d,\nu)\rho\right|\right. & \left.+\left| \int^1_{-1} q_1\partial_y q_1 \rho\right|\right) =:\frac{Ce^{-s}}{r_0-e^{-s}}(I_1+I_2).
\end{split}
\end{equation}
Using Cauchy-Schwarz inequality, Lemma \ref{estimates in different norms} and \eqref{kappa* bound}, we obtain
$$I_1=\left|\int^1_{-1} q_1\partial_y \kappa^*_1(d,\nu)\rho\right|\leq ||q_1||_{L^2_{\frac{\rho}{1-y^2}}}|| \kappa^{*}_1(d,\nu)||_{\mathcal{H}_0}\leq C||q||_{\mathcal{H}}.$$
With the same computations, we obtain for the second integral
$$I_2\leq C||q||_{\mathcal{H}}^2.$$
Therefore for $s_0$ large enough with \eqref{extra term exp ineq} and \eqref{consequances modulation technique}becomes
\begin{equation}\label{extra term exp}
\begin{split}
    \left|e^{-s}\frac{(N-1)}{r_0+ye^{-s}}\int^1_{-1} q_1\partial_y (\kappa^*_1(d,\nu)+q_1)\rho\right|\leq Ce^{-s} ||q||_{\mathcal{H}}(1+||q||_{\mathcal{H}}) \leq C e^{-s}.
\end{split}
\end{equation}
Combining \eqref{extra term ineq}, \eqref{extra term some ineq}, \eqref{extra term kappa* ineq} and \eqref{extra term exp}, we obtain \ref{additional estimate}. This concludes the proof of Lemma \ref{lemma diff ineq para}.

\end{proof}

From the differential inequalities in lemma \ref{lemma diff ineq para}, we can derive some energy functional for equation \eqref{eq q} which is easily controllable and equivalent to the norm squared. More precisely, we claim the following:

\begin{cor}[Some energy functional]
There exist constants $C^*>0$ and $\delta \in [0,1)$ such that for all $ s\in [s_0,\bar s)$, we have
\begin{equation}\label{h2' ineq}
\frac{1}{C^*}||q||^2_{\mathcal{H}}\leq h \leq C^*||q||^2_{\mathcal{H}} \text{ and }h'\leq  -\delta h+Ce^{-s},
\end{equation}
where
\begin{equation*}
    h=\frac{1}{2}\varphi(q,q)-\int^1_{-1} \mathcal{F}(q_1)\rho+\eta \int^1_{-1}q_1q_2\rho,
\end{equation*}
with $\mathcal{F}(q_1)$ is defined in \eqref{def A and R} and $\eta=\frac{3}{C^*(p-1)}$.  
\end{cor}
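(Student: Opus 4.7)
The plan is to construct the energy functional $h$ as the standard Merle--Zaag Lyapunov functional for this type of wave problem: the sum $\frac{1}{2}\varphi(q,q) + R$ from item \ref{diff 1/2 A+R estimate} provides dissipation in $q_2$ but no decay in $q_1$, whereas the correction $\eta\int_{-1}^{1} q_1q_2\,\rho$ from item \ref{additional estimate}, with a suitably small $\eta>0$, supplies the missing dissipation on the $\varphi(q,q)$ term. The proof splits into two tasks: first establishing the norm equivalence $\frac{1}{C^*}\|q\|_{\mathcal H}^2 \le h \le C^*\|q\|_{\mathcal H}^2$, and then deriving the differential inequality $h' \le -\delta h + C e^{-s}$.

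For the equivalence, the key ingredient is the coercivity of $\varphi$ under the orthogonality conditions $\pi^{d^*}_\lambda(q)=0$ for $\lambda=0,1$, which hold throughout $[s_0,\bar s)$ by \eqref{def w,kappa,d,nu,q}; this is the spectral property established in \cite{MZ2007,MZ2012}, and guarantees $\varphi(q,q) \ge c\,\|q\|_{\mathcal H}^2$. The nonlinear term $\int \mathcal F(q_1)\rho$ satisfies $|\int \mathcal F(q_1)\rho| \le C\|q\|_{\mathcal H}^{p+1}$ by the definition of $\mathcal F$ and a standard Sobolev-type estimate, which by the smallness $\|q\|_{\mathcal H}\le s_0^{-1/2}$ from \eqref{consequances modulation technique} is $o(\|q\|_{\mathcal H}^2)$. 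Finally $\eta|\int q_1q_2\,\rho| \le \eta\|q\|_{\mathcal H}^2$ by Cauchy--Schwarz. Choosing $\eta$ small and $s_0$ large then yields both inequalities in the equivalence.

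For the differential inequality, I combine items \ref{diff 1/2 A+R estimate} and \ref{additional estimate} of Lemma \ref{lemma diff ineq para}, noting $-\int\mathcal F(q_1)\rho = R + O(\|q\|_{\mathcal H}^{p+1})$ (in fact, after a minor adjustment in the definition of $\mathcal F$ versus $\mathcal F_{\nu,d}$, which can be absorbed by taking $s_0$ large), so that
\begin{equation}
h' \le \frac{C}{s_0^{1/4}}\varphi(q,q) + \Bigl(Ce^{-s}-\frac{4}{p-1}\Bigr)\int_{-1}^1 q_2^2\frac{\rho}{1-y^2}\,dy - \frac{9\eta}{10}\varphi(q,q) + C\eta\int_{-1}^1 q_2^2\frac{\rho}{1-y^2}\,dy + Ce^{-s}.
\end{equation}
I then pick $\eta>0$ so small that $C\eta \le \frac{2}{p-1}$, and then $s_0$ so large that $\frac{C}{s_0^{1/4}} \le \frac{\eta}{4}$ and $Ce^{-s_0}\le \frac{1}{p-1}$. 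This absorbs all bad terms and yields
\begin{equation}
h' \le -\frac{\eta}{2}\varphi(q,q) - \frac{2}{p-1}\int_{-1}^1 q_2^2\frac{\rho}{1-y^2}\,dy + Ce^{-s}.
\end{equation}
Invoking the coercivity $\varphi(q,q)\ge c\|q\|_{\mathcal H}^2$ once more, together with the already proved equivalence $h \sim \|q\|_{\mathcal H}^2$, the right-hand side is bounded above by $-\delta h + Ce^{-s}$ for some $\delta>0$.

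The main obstacle is essentially bookkeeping: the inequality must be closed with strictly positive $\delta$, which demands that all error terms (the $s_0^{-1/4}$ factor from the modulation, the exponentially small $e^{-s}$ corrections, and the $C\eta$ multiplying the $q_2$-dissipation) be strictly dominated by the main negative contributions $-\frac{9\eta}{10}\varphi$ and $-\frac{4}{p-1}\int q_2^2\rho/(1-y^2)$. This forces a careful ordering in the choice of constants: first $\eta$ small (depending only on $p$), then $s_0$ large (depending on $\eta$). The underlying spectral coercivity of $\varphi$ on the space orthogonal to the nonnegative modes $W^{d^*}_0,W^{d^*}_1$ is the only nontrivial analytic input and is borrowed verbatim from \cite{MZ2007,MZ2012}.
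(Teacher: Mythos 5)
Your proposal is correct and follows essentially the same route as the paper: the norm equivalence is obtained from the coercivity/equivalence $\varphi(q,q)\sim\|q\|_{\mathcal H}^2$ of \eqref{ineq norm q2 & A} after absorbing the nonlinear term and the $\eta\int q_1q_2\rho$ correction as small perturbations of $\tfrac12\varphi(q,q)$, and the differential inequality is obtained by adding items \ref{diff 1/2 A+R estimate} and \ref{additional estimate} of Lemma \ref{lemma diff ineq para} with $\eta$ fixed small in terms of $p$ and $C^*$ and then $s_0$ taken large. The only cosmetic difference is that you spell out the bounds the paper delegates to \cite[(C.21)]{MZ2012}.
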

\begin{proof}
For the first inequality, with similar computation as \cite[(C.21)]{MZ2012}, we have
\begin{equation}\label{ineq h & A }
    \left|h-\frac{1}{2}\varphi(q,q)\right|\leq \frac{1}{100}\varphi(q,q).
\end{equation}
Combining this with \eqref{ineq norm q2 & A}, we obtain the first inequality.

For the second inequality, we have from definition of $\varphi$ that
\begin{equation}
\begin{split}
        h'&\leq C h\left(\frac{1}{s_0^{\frac{1}{4}}} -\eta\frac{9}{10}\right)+\left(C^*(e^{-s}+\eta)-\frac{4}{p-1}\right)\int^1_{-1}q_2^2\frac{\rho}{1-y^2} d y+C(1+\eta) e^{-s}.
\end{split}
\end{equation}
By taking $s_0$ sufficiently large, $\eta=\frac{3}{C^*(p-1)}$, and $s_0$ large enough, there exists $\delta\in (0,1)$ such that
\begin{equation}
        h'\leq  -\delta h+Ce^{-s}.
\end{equation}

Obtaining the second inequality, we conclude the proof.
\end{proof}

We are now in a position to prove Proposition \ref{Dynamics of the parameters}:
\begin{proof}[Proof of Proposition \ref{Dynamics of the parameters}]
We have from the second inequality of \eqref{h2' ineq} that
$$\left( e^{\delta s} h\right)' \leq Ce^{(\delta-1) s}.$$
Therefore for all $s\in [s_0,\bar s)$, we have
\begin{equation}
\begin{split}
h(s)&\leq \frac{Ce^{-s_0}}{1-\delta}\left[e^{-\delta(s- s_0)}-e^{-(s-s_0)}\right]+e^{-\delta(s- s_0)}h(s_0).\\
&\leq Ce^{-\delta(s- s_0)-s_0}+e^{-\delta(s- s_0)}h(s_0),\\
\end{split}
\end{equation}
which concludes the proof of Proposition \ref{Dynamics of the parameters}.
\end{proof}

\section{Trapping in some shrinking set}\label{section shrink set}
In this section, we prove Proposition \ref{sol conv solition} by trapping $\nu, d$ and $q$, defined in \eqref{def w,kappa,d,nu,q} and right before and in \eqref{q}, in a set shrinking to zero. We proceed in two steps: We use the differential equality and inequalities  obtained earlier combined with a topological argument. Then, we conclude the proof of Proposition \ref{sol conv solition} using a result obtained in \cite{MZ2012}.

\subsection{Reduction thanks to some shrinking set}\label{subsec shriking set}
We introduce the following set $V_{A,\dhz}(s)$:
\begin{defn}[A set shrinking to zero]
    For  $\dhz\in (-1,1)$, $A>0$, $s_0\in \R$ and $s\geq s_0$, we define $V_{A,\dhz}(s)$ as the set of $(\nu(s),d(s),q(s))\in (-1,1)^2\times\mathcal{H}$ such that
    \begin{equation}\label{ineq V_A,d_0}
        \begin{split}
            &|\nu(s)|\leq A e^{-\delta(s-s_0)-s_0},\\
            &|d(s)-\dhz|\leq A e^{-\delta(s-s_0)-s_0},\\
            &||q(s)||^2_{\mathcal{H}}\leq A e^{-\delta(s-s_0)-s_0},\\
        \end{split}
    \end{equation}
    where $\delta\in (0,1)$ has already been fixed in Proposition \ref{Dynamics of the parameters}.
\end{defn}
We have the following proposition giving us, for well-chosen initial data, the existence of a solution in the shrinking set: 
\begin{prp}[Existence of a solution in $V_{A,\dhz}(s)$]\label{initial data}
    For all $r_0>0$ and $\dhz\in (-1,1)$, there exists $A_0>0$, such that for  all $A\geq A_0$, there exists $s_0=s_0(r_0,\dhz,A)>-\log r_0$, such that there exist $\dz\in (-1,1), \nz>-1+|\dz|$ with
    $$|\dz-\dhz|\leq Ae^{-s_0} \text{ and } |\nz|\leq Ae^{-s_0},$$
    so that $(\nu(s),d(s), q(s))\in V_{A,\dhz}(s)$, for  all $s\geq s_0$, where $q,d$ and $\nu$ are defined in \eqref{def w,kappa,d,nu,q} and right before, and $w$ is the solution of equation \eqref{eq similary} with initial data \eqref{initial data soliton}.
\end{prp}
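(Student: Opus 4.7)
The plan is to combine Proposition \ref{Dynamics of the parameters}, which automatically controls $\|q\|_{\mathcal H}$, with a topological shooting argument on the two parameters $(\dz,\nz)$. Since the initial data \eqref{initial data soliton} equals $\kappa^*(\dz,\nz)$ exactly, uniqueness in the modulation (Proposition \ref{modulation technique}) yields $q(s_0)=0$, $d(s_0)=\dz$ and $\nu(s_0)=\nz$, so the decomposition \eqref{def w,kappa,d,nu,q} is valid on the maximal modulation interval $[s_0,\bar s)$. On this interval, Proposition \ref{Dynamics of the parameters} gives $\|q(s)\|_{\mathcal H}^2\le C^*e^{-\delta(s-s_0)-s_0}$, and choosing $A\ge 2C^*$ makes the third inequality of \eqref{ineq V_A,d_0} strictly satisfied for every $(\dz,\nz)$. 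Only the first two inequalities of \eqref{ineq V_A,d_0}, on $|\nu|$ and $|d-\dhz|$, remain to be enforced by tuning $(\dz,\nz)$.

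Feeding this $q$-bound into \eqref{diff inequality xi nu d original} and using $e^{-s}\le e^{-\delta(s-s_0)-s_0}$ (since $\delta<1$), together with the bootstrap $(\nu,d,q)\in V_{A,\dhz}$ that makes the cross-term $\|q\|_{\mathcal H}|\nu|/(1-d^{*2})$ subdominant, the dynamics of $(d,\nu)$ reduces to
\begin{equation*}
|d'(s)|+|\nu'(s)-\nu(s)|\le \Lambda\,e^{-\delta(s-s_0)-s_0},
\end{equation*}
with $\Lambda$ independent of the trajectory. Integrating the $d$-bound produces a continuous limit $d_\infty(\dz,\nz):=\lim_{s\to\infty}d(s)$ with $|d(s)-d_\infty|\le(\Lambda/\delta)e^{-\delta(s-s_0)-s_0}$, so the $d$-constraint of $V_{A,\dhz}$ holds as soon as $d_\infty=\dhz$ and $A\ge \Lambda/\delta$. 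On the $\nu$ side, Duhamel yields
\begin{equation*}
\nu(s)=e^{s-s_0}\nz+\int_{s_0}^{s}e^{s-\tau}(\nu'-\nu)(\tau)\,d\tau,
\end{equation*}
exhibiting a single unstable direction: $\nu$ stays bounded iff $\nz=-\int_{s_0}^{\infty}e^{s_0-\tau}(\nu'-\nu)\,d\tau$, in which case one automatically obtains the sharper bound $|\nu(s)|\le(\Lambda/(1+\delta))e^{-\delta(s-s_0)-s_0}$. Both conditions are implicit in the trajectory, coupling $\dz$ and $\nz$.

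The two conditions are solved simultaneously by a topological shooting on $\mathcal P=\{(\dz,\nz):|\dz-\dhz|\le Ae^{-s_0},\ |\nz|\le Ae^{-s_0}\}$. For each $(\dz,\nz)\in\mathcal P$ let $s^*(\dz,\nz)$ be the first exit time from $V_{A,\dhz}$; by the automatic $q$-bound, the exit happens through the $d$- or $\nu$-face. In the rescaled coordinates $\tilde d=e^{\delta(s-s_0)+s_0}(d-\dhz)/A$ and $\tilde\nu=e^{\delta(s-s_0)+s_0}\nu/A$, the reduced ODE yields $\tilde\nu\,\tilde\nu'\ge((1+\delta)-\Lambda/A)\tilde\nu^2>0$ on the $\nu$-face for $A$ large, so this face is strictly outgoing; the $d$-face is handled not by pointwise transversality but via the intermediate-value statement that the continuous map $\dz\mapsto d_\infty(\dz,\nz)$ covers a neighbourhood of $\dhz$. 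Defining the exit map $\Theta:(\dz,\nz)\mapsto(\tilde d(s^*),\tilde\nu(s^*))\in\partial([-1,1]^2)$ on the exiting parameters, $\Theta$ is continuous and, on $\partial\mathcal P$ (where $s^*=s_0$), equals the identity up to rescaling; Brouwer's theorem then rules out $\Theta$ being defined on all of $\mathcal P$, forcing the existence of a non-exiting $(\dz,\nz)$.

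The main obstacle is the transversality on the moving boundary of $V_{A,\dhz}$: it is strict only on the $\nu$-face (thanks to the $e^{s}$ instability) and has to be replaced by a separate IVT mechanism on the $d$-face. This mismatch—one genuinely unstable direction and one neutral direction—is the reason the shooting must be set up as a two-parameter degree argument rather than a one-dimensional IVT, and is exactly analogous to \cite{MZ2007} in the one-dimensional case and \cite{MZ2012} in the multi-soliton case. The extra gradient term $e^{-s}(N-1)\partial_y w/(r_0+ye^{-s})$ specific to the radial setting has already been absorbed into the error $\Lambda e^{-\delta(s-s_0)-s_0}$ by the proof of Proposition \ref{Dynamics of the parameters}, so at the present stage it contributes only harmlessly to the forcing.
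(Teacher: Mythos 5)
Your overall strategy coincides with the paper's: use Proposition \ref{Dynamics of the parameters} to dispose of the $q$-constraint automatically for $A>C^*$, reduce to the two parameter inequalities on $\nu$ and $d-\dhz$, and run a two-parameter topological (no-retraction/Brouwer) argument on the exit map from $V_{A,\dhz}$. However, there is a genuine gap in your treatment of the $d$-face. You assert that pointwise transversality is available only on the $\nu$-face and must be ``replaced by a separate IVT mechanism'' on the $d$-face, via the map $\dz\mapsto d_\infty(\dz,\nz)$. This does not close the argument: the continuity of the exit map $\Theta$ (and of $s^*$), which your Brouwer step requires, is exactly what transversality at \emph{every} exit point is needed for, and an intermediate-value statement about the limit $d_\infty$ cannot be spliced into a degree argument whose other coordinate is handled by transversal exit. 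Moreover, $d_\infty$ depends on $\nz$ and the $\nu$-trajectory depends on $\dz$, so the two conditions are coupled and cannot be solved one at a time by IVT. The same circularity affects your Duhamel characterization $\nz=-\int_{s_0}^{\infty}e^{s_0-\tau}(\nu'-\nu)\,d\tau$: the integrand depends on the trajectory, hence on $\nz$, so this is an implicit fixed-point condition, not a formula; the paper never solves for $\nz$ and relies only on transversality plus Brouwer.

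The missing observation is that transversality \emph{does} hold on the $d$-face, precisely because the set $V_{A,\dhz}$ is shrinking. Your own rescaled variable shows it: with $\tilde d=e^{\delta(s-s_0)+s_0}(d-\dhz)/A$ one has $\tilde d'=\delta\tilde d+e^{\delta(s-s_0)+s_0}d'/A$, and since \eqref{diff inequality xi nu d original} together with Proposition \ref{Dynamics of the parameters} gives $|d'(s^*)|\leq \bar C(C^*+2)e^{-\delta(s^*-s_0)-s_0}$, on the face $|\tilde d|=1$ one gets $\tilde d\,\tilde d'\geq \delta-\bar C(C^*+2)/A>0$ once $A>\bar C(C^*+2)/\delta$. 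Equivalently, as in the paper's Case~2.1, at $d(s^*)=\dhz+Ae^{-\delta(s^*-s_0)-s_0}$ the boundary curve has derivative $-\delta Ae^{-\delta(s^*-s_0)-s_0}<d'(s^*)$, so the crossing is strictly outgoing. The $d$-direction is neutral for the flow but not for the \emph{rescaled} flow, where the shrinkage rate $\delta$ supplies the outgoing drift. With this correction both faces are transversal, $\Theta$ is continuous and restricts to the identity on $\partial\Omega$ (where $s^*=s_0$), and the no-retraction theorem gives the contradiction; the stable-manifold detour for $\nz$ and the IVT detour for $\dz$ can both be discarded. You should also include the paper's Case~1 ($s^*=\bar s<+\infty$), checking that the $V_{A,\dhz}$ bounds are strictly stronger than the modulation constraints \eqref{consequances modulation technique}, so that the modulation never terminates before the exit from $V_{A,\dhz}$; your proposal omits this step entirely.
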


\begin{proof}
   Consider $r_0>0, \dhz\in (-1,1), A\geq A_0$ and $s_0\geq -\log r_0$, where $A_0$ and $s_0$ will be chosen later. For any $(\dz,\nz)\in \R^2$ such that
   \begin{equation}\label{initial bar d,nu in V_A,d_0}
       |\dz-\dhz|\leq A e^{-s_0} \text{ and } |\nu_0|\leq A e^{-s_0},
   \end{equation}
   it holds that $\dz\in (-1,1)$, and $\nz>-1+|\dz|$ for $s_0$ large enough. Hence, initial data in \eqref{initial data soliton} are well-defined and belongs to $\mathcal{H}$. From \eqref{nu,d(s0)=nu0,d_0} and \eqref{initial data soliton}, one may see that $$(\nu(s_0),d(s_0),q(s_0))=(\nz,\dz,0)\in V_{A,\dhz}(s_0).$$
    We assume $(\nu(s),d(s),q(s))$ does not belong to $V_{A,\dhz}(s)$ for some $s\in [s_0,\bar s)$, where $\bar s$ is defined above \eqref{def w,kappa,d,nu,q}. By continuity, there exists a maximal $s^*=s^*(\dz,\nz)\in [s_0,\bar s]$ such that for all $s\in [s_0,s^*]$, we have $(\nu(s),d(s), q(s))\in V_{A,\dhz}(s)$.
   
   If $s^*=+\infty$ for some $(\dz,\nz)$ such that \eqref{initial bar d,nu in V_A,d_0} holds, then we get to the conclusion of Proposition \ref{initial data}.
   
   If $s^*<+\infty$ for all $(\dz,\nz)$ such that \eqref{initial bar d,nu in V_A,d_0}, we will find a contradiction below. Two cases may happen if $s^*<+\infty$:

   \textbf{Case 1:} If $s^*=\bar s$, then $\bar s<+\infty$. As said in the statement after \eqref{equiv d and d*}, one of the inequalities of \eqref{consequances modulation technique} becomes an equality, i.e.
    \begin{equation}\label{eq on s_0}
    \text{either }\ \frac{|\nu(\bar s)|}{1-|d(\bar s)|}= s_0^{-\frac{1}{4}}, \text{ or }\ ||q(\bar s)||_{\mathcal{H}}= s_0^{-1/2},
    \end{equation}
    From the fact of $(\nu,d,q)(\bar s)\in V_{A,\dhz}(\bar s)$, we have 
    \begin{align}
        &||q(\bar s)||_{\mathcal{H}}^2\leq Ae^{-\delta(\bar s-s_0)-s_0},\label{q(bar s) in V_A,d_0}\\
        &|d(\bar s)-\dhz|\leq A e^{-\delta(\bar s-s_0)-s_0},\label{d(bar s) in V_A,d_0}\\
        &|\nu(\bar s)|\leq A e^{-\delta(\bar s-s_0)-s_0}.\label{nu(bar s) in V_A,d_0}
    \end{align}
Hence, for $s_0$ large enough, we have
$$|d(\bar s)-\dhz|\leq \frac{1-|\dhz|}{2}.$$
In particular, we have
$$|d(\bar s)|\leq \frac{1+|\dhz|}{2}.$$
Hence
 $$1-|d(\bar s)|\geq \frac{1-|\dhz|}{2}.$$
Using \eqref{nu(bar s) in V_A,d_0}, for $s_0$ large enough, we obtain
\begin{equation}\label{ineq nu/1-d}
\frac{|\nu(\bar s)|}{1-|d(\bar s)|}\leq \frac{2A}{1-|\dhz|}e^{-\delta(\bar s-s_0)-s_0}<s_0^{-\frac{1}{4}}.
\end{equation}
Together with \eqref{q(bar s) in V_A,d_0}, for $s_0$ large enough, we have a contradiction with \eqref{eq on s_0}.

\textbf{Case 2:} If $s^*<\bar s$, then by continuity and by definition of $s^*$, we have either
\begin{equation}\label{eq s^*}
|\nu(s^*)|= A e^{-\delta(s^*-s_0)-s_0}, \text{ or } |d(s^*)-\dhz|= A e^{-\delta(s^*-s_0)-s_0}, \text{ or } ||q(s^*)||^2_{\mathcal{H}}= A e^{-\delta(s^*-s_0)-s_0}.
\end{equation}
Together with Proposition \ref{Dynamics of the parameters}, taking $A>C^*$, we exclude the third equality. Therefore, we have only two possibilities:
\begin{align}
    &\text{either }|d(s^*)-\dhz|= A e^{-\delta(s^*-s_0)-s_0},\label{eq d s*}\\
    &\text{or }|\nu(s^*)|= A e^{-\delta(s^*-s_0)-s_0}\label{eq nu s*}.
\end{align}

\textbf{Case 2.1:} We assume that \eqref{eq d s*} holds. From \eqref{diff inequality xi nu d original}, Proposition \ref{Dynamics of the parameters}, \eqref{equiv d and d*}, together with the fact that $0<\delta<1$, we have for all $s\in [s_0,s^*]$
$$|d'(s)|\leq \bar C\left((C^*+1)e^{-\delta(s- s_0)-s_0}+\sqrt{C^*}Ae^{-\frac{\delta}{2}(s- s_0)-\frac{s_0}{2}} e^{-\delta(s^*-s_0)-s_0}\right).$$
For $s_0$ large enough, we have
$$|d'(s^*)|\leq \bar C(C^*+2)e^{-\delta(s^*- s_0)-s_0}.$$
From \eqref{eq d s*}, if
$$d(s^*)=\dhz+Ae^{-\delta(s^*- s_0)-s_0},$$
taking $A> \max(C^*,\frac{\overline{C}(C^*+2)}{\delta})$, we have
$$\frac{d}{d s}[\dhz+Ae^{-\delta(s- s_0)-s_0}]_{s=s^*}=-\delta Ae^{-\delta(s^*- s_0)-s_0}<d'(s^*).$$
Therefore, the curve $s\mapsto d(s)$ transversely crosses the curve $s\mapsto d_0+Ae^{-\delta(s- s_0)-s_0}$ at $s=s^*$. The same arguments can be used for the opposite case, when $d(s^*)=\dhz-Ae^{-\delta(s^*- s_0)-s_0}$.

%$$|d(s^*)-\dz|=|\int^{s^*}_{s_0}d'(s)|\leq \int^{s^*}_{s_0}|d'(s)|\leq \frac{\bar C(C^*+2)}{\delta}(e^{-s_0}-e^{-\delta(s^*- s_0)-s_0})\leq \frac{\bar C(C^*+2)}{\delta}e^{-s_0},$$
%for $s_0$ large enough for the last inequality. Thus
%$$|d(s^*)- \dhz|\leq \left(\frac{\bar C(C^*+2)}{\delta}+1\right)e^{-s_0},$$

%Taking $A_0=\frac{\bar C(2C^*+1)}{\delta}$, this contradicts the first inequality of \eqref{eq s^*}.

\textbf{Case 2.2:} We assume that \eqref{eq nu s*} holds. Again, from \eqref{diff inequality xi nu d original}, Proposition \ref{Dynamics of the parameters} and the fact that $|d^*|<1$, we have for all $s\in [s_0,s^*]$
$$|\nu'(s)-\nu(s)|\leq \bar C\left((C^*+1)e^{-\delta(s- s_0)-s_0}+\sqrt{C^*}Ae^{-\frac{\delta}{2}(s- s_0)-\frac{s_0}{2}} e^{-\delta(s^*-s_0)-s_0}\right).$$
For $s_0$ large enough, we have that
$$|\nu'(s^*)-\nu(s^*)|\leq \bar C(C^*+2)e^{-\delta(s^*- s_0)-s_0}.$$
From \eqref{eq d s*}, if
$$\nu(s^*)=Ae^{-\delta(s^*- s_0)-s_0},$$
then
$$\nu'(s^*)\geq \nu(s^*)-\bar C(C^*+2)e^{-\delta(s^*- s_0)-s_0}= (A-\bar C(C^*+2))e^{-\delta(s^*- s_0)-s_0}.$$
Since $\delta<1$, taking again $A> \max(C^*,\frac{\overline{C}(C^*+2)}{\delta})$, we have
$$\frac{d}{d s}[Ae^{-\delta(s- s_0)-s_0}]_{s=s^*}<\nu'(s^*).$$
Therefore, the curve $s\mapsto \nu(s)$ transversely crosses the curve $s\mapsto Ae^{-\delta(s- s_0)-s_0}$ at $s=s^*$. The same arguments can be used for the opposite case, when $\nu(s^*)=-Ae^{-\delta(s^*- s_0)-s_0}$.

\textbf{Conclusion:} Take $A_0=\max(C^*,\frac{\overline{C}(C^*+2)}{\delta})+1$. Whether Case 2.1 or Case 2.2 holds, we have just proved that the flow $s\mapsto (d(s),\nu(s))$ is transverse outgoing on the surface $s\mapsto( \dhz \pm Ae^{-\delta(s- s_0)-s_0}, \pm Ae^{-\delta(s- s_0)-s_0})$ at the crossing point $s=s^*(\dz, \nz)$. In particular, $(\dz, \nz)\mapsto s^*(\dz, \nz)$ is continuous. Let us introduce the following change of variables
$$(\bar \nu_0,\bar d_0)=\left(\frac{\nu_0}{Ae^{-s_0}},\frac{d_0-\dhz}{Ae^{-s_0}}\right),$$
and the following function
\begin{equation}
\begin{array}{ccccc}
\Phi & : & [-1,1]^2  & \to & \partial [-1,1]^2 \\
 & & (\bar \nu_0,\bar d_0) & \mapsto &(\frac{\nu(s^*)}{Ae^{-\delta(s^*- s_0)-s_0}},\frac{d(s^*)-\dhz}{Ae^{-\delta(s^*- s_0)-s_0}}). \\
\end{array}
\end{equation}
We claim the following properties for $\Phi$:
\begin{enumerate}[label=(\roman*)]
    \item It is well-defined from \eqref{eq d s*} and \eqref{eq nu s*}.
    \item It is continuous from the outgoing transverse property.
    \item If $(\bar \nu_0,\bar d_0)\in \partial [-1,1]^2$, then $(\nz,\dz)\in \partial \left([-Ae^{-s_0},Ae^{-s_0}]\times [\dhz-Ae^{-s_0},\dhz+Ae^{-s_0}]\right)$, which implies $s^*(\nz,\dz) =s_0$, from the outgoing transverse property. Together with \eqref{nu,d(s0)=nu0,d_0}, we obtain that $\Phi_{\partial [-1,1]^2}=Id_{\partial [-1,1]^2}$.
\end{enumerate}
From index theory, we know that such a function with the previous properties does not exist, and we have our contradiction, which concludes the proof of Proposition \ref{initial data}.
\end{proof}

\subsection{Proof of Proposition \ref{sol conv solition}}\label{proof of prp}
Before concluding the proof of Proposition \ref{sol conv solition}, let us first recall from \cite[Lemma A.2]{MZ2012} the following continuity result for the solitons $\kappa^*$:
\begin{lemma}[Continuity of $\kappa^*$]\label{continuity kappa*}
For all $B \geq 2$, there exists $C(B)> 0$ such that if $(d_1,\nu_1)$ and $ (d_2,\nu_2)$ satisfy
$$\frac{\nu_1}{1-|d_1|},\frac{\nu_2}{1-|d_2|}\in \left[-1+\frac{1}{B},B\right],$$
then 
$$||\kappa^*(d_1,\nu_1)-\kappa^*(d_1,\nu_1)||_{\mathcal{H}}\leq C(B)\left(\left|\frac{\nu_1}{1-|d_1|}-\frac{\nu_2}{1-|d_2|}\right|+|\arg \tanh d_1-\arg \tanh d_2|\right).$$

\end{lemma}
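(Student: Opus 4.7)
The plan is to change coordinates from $(d,\nu)$ to $(\xi,\mu) := (\arg\tanh d,\, \nu/(1-|d|))$, which are the natural variables for this soliton family: the hypothesis on $(d_i,\nu_i)$ translates to $\mu_i \in [-1+1/B,B]$, and the estimate to prove becomes a Lipschitz bound on the convex strip $\Sigma_B := \R\times[-1+1/B,B]$. Setting $\tilde\kappa^*(\xi,\mu,y) := \kappa^*(\tanh\xi,\,\mu(1-|\tanh\xi|),\,y)$, the desired inequality is equivalent to
\[ \bigl\|\tilde\kappa^*(\xi_1,\mu_1) - \tilde\kappa^*(\xi_2,\mu_2)\bigr\|_{\mathcal{H}} \le C(B)\bigl(|\xi_1-\xi_2| + |\mu_1-\mu_2|\bigr). \]
I would integrate the derivative of $t\mapsto \tilde\kappa^*((1-t)\xi_1+t\xi_2,\,(1-t)\mu_1+t\mu_2)$ along the straight segment (splitting at $\xi=0$ if necessary, to accommodate the mild non-smoothness of $(d,\nu)\mapsto\mu$ at $d=0$) and apply Minkowski's integral inequality to the $\mathcal{H}$-norm. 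This reduces the lemma to the uniform pointwise bound
\[ \sup_{(\xi,\mu)\in\Sigma_B}\bigl(\|\partial_\xi \tilde\kappa^*(\xi,\mu)\|_{\mathcal{H}} + \|\partial_\mu \tilde\kappa^*(\xi,\mu)\|_{\mathcal{H}}\bigr) \le C(B), \quad (\star) \]
which is the core content to verify.

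To prove $(\star)$, I would differentiate the explicit formulas \eqref{kappa*1}--\eqref{kappa*2} using the chain rule
\[ \partial_\mu = (1-|d|)\,\partial_\nu, \qquad \partial_\xi = (1-d^2)\,\partial_d - \operatorname{sign}(d)\,(1-d^2)\,\mu\,\partial_\nu. \]
Both $\partial_\xi \tilde\kappa^*$ and $\partial_\mu \tilde\kappa^*$ then take the form $\kappa_0\,(1-d^2)^{1/(p-1)}\,(1+dy+\nu)^{-\alpha}\,P(y,d,\mu)$ for some $\alpha>0$ and some polynomial $P$ whose coefficients are uniformly bounded on $\Sigma_B$. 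The elementary factorization
\[ 1+dy+\nu = (1-|d|)(1+\mu) + |d|\bigl(1+\operatorname{sign}(d)\,y\bigr) \]
gives, for $y\in[-1,1]$, the two-sided bound $(1-|d|)(1+\mu)\le 1+dy+\nu\le 2$, where $1+\mu\ge 1/B$ is uniformly positive. Plugged into $\|\cdot\|_{\mathcal{H}}^2$ with $\rho(y)=(1-y^2)^{2/(p-1)}$, the squared norm becomes a finite sum of integrals of the form $\int_{-1}^{1}(1-y^2)^{\gamma}(1+dy+\nu)^{-\beta}\,dy$. A Möbius change of variable adapted to the soliton structure, namely $z := (y+d)/(1+dy)$, satisfies the identities
\[ 1-y^2 = \frac{(1-d^2)(1-z^2)}{(1-dz)^2}, \qquad 1+dy = \frac{1-d^2}{1-dz}, \]
and transforms each such integral into an integral over $z\in(-1,1)$ in which all powers of $(1-d^2)$ cancel, leaving an expression depending only on $\mu$ and uniformly bounded for $\mu\in[-1+1/B,B]$.

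The main obstacle is controlling the norm in the limit $|d|\to 1$, i.e.\ $|\xi|\to\infty$: the prefactor $(1-d^2)^{1/(p-1)}$ vanishes, but the denominator $(1+dy+\nu)^{-\alpha}$ degenerates near $y=-\operatorname{sign}(d)$, a region where the weight $\rho$ is not small. The Möbius substitution above is tailored precisely to reconcile these competing singularities, absorbing all powers of $(1-d^2)$ through its Jacobian and through the factor $1-y^2$ built into $\|\cdot\|_{\mathcal{H}_0}$; once this cancellation is made explicit, the remaining estimates are elementary. Since the statement is quoted verbatim from \cite[Lemma A.2]{MZ2012}, an alternative and shorter route is simply to invoke that reference.
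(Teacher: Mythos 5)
Your proposal is essentially correct, but it does far more work than the paper does: the paper offers no proof at all for this lemma, stating it only as a recalled result and invoking \cite[Lemma A.2]{MZ2012} (the option you mention in your last sentence is in fact the paper's entire ``proof''). Your self-contained argument is sound in its architecture: passing to $(\xi,\mu)=(\arg\tanh d,\,\nu/(1-|d|))$ turns the hypothesis into membership in the convex strip $\Sigma_B$, the segment-plus-Minkowski reduction to the uniform derivative bound $(\star)$ is legitimate (and your splitting at $\xi=0$ correctly handles the kink of $|d|$), the factorization $1+dy+\nu=(1-|d|)(1+\mu)+|d|(1+\operatorname{sign}(d)y)$ does give the crucial lower bound $(1-|d|)(1+\mu)\ge(1-|d|)/B$, and the M\"obius substitution $z=(y+d)/(1+dy)$ does tame the degeneration as $|d|\to 1$. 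Two small inaccuracies are worth fixing if you write this out: the upper bound should read $1+dy+\nu\le 2+B$ rather than $\le 2$ (since $\nu$ can be as large as $B(1-|d|)$); and it is not literally true that ``all powers of $(1-d^2)$ cancel'' after the substitution --- what one actually finds is a nonnegative leftover power of $(1-|d|)$ multiplying an integral in $z$ that may grow at worst like $(1-|d|)^{2/(p-1)-1}$ or logarithmically near $z=\operatorname{sign}(d)$, and it is the \emph{product} that stays bounded on $\Sigma_B$ (a short case check on the exponent $2/(p-1)$ versus $1$ settles this). Finally, note the statement as printed contains a typo --- the left-hand side reads $\|\kappa^*(d_1,\nu_1)-\kappa^*(d_1,\nu_1)\|_{\mathcal{H}}$, which is identically zero; your proof correctly targets the intended $\kappa^*(d_2,\nu_2)$. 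What your route buys over the paper's citation is a verifiable, elementary proof; what the citation buys is brevity and consistency with \cite{MZ2012}, where the same statement is established.
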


Let us now give the proof of Proposition \ref{sol conv solition}.
\begin{proof}[Proof of Proposition \ref{sol conv solition}]

Consider $\delta\in(0,1)$ defined in Proposition \ref{Dynamics of the parameters}. Then consider arbitrary $r_0$ and $\dhz\in (-1,1)$. Note that $\kappa^*(\dhz,0)=\binom{\kappa(\dhz)}{0}$. Then, from Proposition \ref{initial data}, there exists $A>0$, $s_0(r_0,\dhz,A)\geq -\log r_0$, $\dz, \nz$, such that the solution $w(y,s)$ of \eqref{eq similary} with initial data given in \eqref{initial data soliton} satisfies $(\nu(s),d(s),q(s))\in V_{A,\dhz}(s)$ for all $s\geq s_0$, where $q,d$ and $\nu$ are defined in \eqref{def w,kappa,d,nu,q} and right before. Therefore,
\begin{equation}\label{conv soliton}
\begin{split}
\left|\left|\binom{w(s)}{\partial_s w(s)}-\binom{\kappa(\dhz)}{0}\right|\right|_{\mathcal{H}}&\leq ||q(s)||_{\mathcal{H}}+||\kappa^*(d,\nu)-\kappa^*(\dhz,0)||_{\mathcal{H}}\\
&\leq \sqrt{A}e^{-\frac{\delta}{2}(s-s_0)-s_0}+||\kappa^*(d,\nu)-\kappa^*(\dhz,0)||_{\mathcal{H}} .   
\end{split}
\end{equation}
Moreover, we have from Proposition \ref{initial data} that 

$$|\nu(s)|\leq A e^{-\delta(s-s_0)-s_0}, |d(s)-\dhz|\leq A e^{-\delta(s-s_0)-s_0}$$
With the same computations as for \eqref{ineq nu/1-d}, for $s_0$ large enough, we have for all $s\geq s_0$,
$$\frac{\nu(s)}{1-|d(s)|}\leq \frac{2A}{1-|\dhz|}e^{-\delta(s-s_0)-s_0}.$$
Thus, for $s_0$ large enough, it holds that
$$\frac{|\nu(s)|}{1-|d(s)|}\in \left[-\frac{1}{2},2\right].$$
Using the fact that $\arg \tanh(x)=\frac{1}{2}\ln\left(\frac{1+x}{1-x}\right)$, for all $x\in(-1,1)$, then applying Lemma \ref{continuity kappa*} with $B=2$, for $s_0$ large enough, we write from \eqref{conv soliton}, for all $s\geq s_0$:
\begin{equation}
\begin{split}
\left|\left|\binom{w(s)}{\partial_s w(s)}-\binom{\kappa(\dhz)}{0}\right|\right|_{\mathcal{H}}&\leq \sqrt{A}e^{-\frac{\delta}{2}(s-s_0)-s_0}+C\frac{|\nu(s)|}{1-|d(s)|}+C|\arg \tanh d(s)-\arg \tanh \dhz|\\
&\leq \sqrt{A}e^{-\frac{\delta}{2}(s-s_0)-s_0}+C\frac{2A}{1-|\dhz|}e^{-\delta(s-s_0)-s_0}+C(\dhz)Ae^{-\delta(s-s_0)-s_0}\\
&\leq C(A,\dhz)e^{-\delta(s-s_0)-s_0},
\end{split}
\end{equation}
which concludes the proof of Proposition \ref{sol conv solition}.
\end{proof}

\section{Proof of Theorem \ref{Thm}}\label{proof of thm}
Now, we have first found our initial data for $\eqref{eq similary}$. The aim of this section is to obtain the initial data for \eqref{eq radial} and conclude the proof of Theorem \ref{Thm}.
Consider $\delta\in (0,1)$ given in Proposition \ref{sol conv solition}, $r_0>0$ and $\dhz\in (-1,1)$. From Proposition \ref{sol conv solition}, we have $s_0\geq -\log r_0$  and $\dz, \nz\in \R$ such that equation \eqref{eq similary}, with initial data \eqref{initial data soliton} has a solution that behaves as in \eqref{profile similary variables}. We introduce $T_0=e^{-s_0}$. Using \eqref{similarity variables} in a reverse way, we define $u$ as the following
\begin{align}\label{def u similarty var}
    &u(r,t)=(T_0-t)^{-\frac{2}{p-1}}w(y,s), \text{ where } y=\frac{r-r_0}{T_0-t}, s=-\log(T_0-t).
\end{align}
Since $w$ is well-defined for $(y,s)\in (-1,1) \times [s_0,+\infty)$, this formula allows us to define a function $u=u(r,t)$ in the backward light cone
\begin{equation}\label{Gamma r_0}
\Gamma(r_0)=\mathcal{C}_{r_0,T_0,1} \cap \{t \geq 0\},
\end{equation}
where $\mathcal{C}_{r_0,T_0,\delta_0}$ is defined right after \eqref{def non charac}. Naturally, $u$ is a solution of equation \eqref{eq radial} in that cone for some initial data, which will be determined shortly afterwards. Note that our cone is included in $\{r>0\}$ since $s_0>-\log r_0$.

In the next step of the proof, we will define the solution $u$ outside the cone $\Gamma(r_0)$. In order to do so, we will first determine the value of the initial data for $(u,\partial_t u)$ at the basis of the cone. Then, extend it to $\R^+$.

Let us then write the value of $(u,\partial_t u)$ at $t=0$ in the basis of the cone $\Gamma(r_0)$. Using \eqref{eq radial}, and the definition of $\kappa^*_1$ and $\kappa^*_2$ in \eqref{kappa*1} and \eqref{kappa*2}, we see that
\begin{equation*}
    \begin{split}
    u(r,0)&=T_0^{-\frac{2}{p-1}}w(y,s_0)=T_0^{-\frac{2}{p-1}}\kappa^*_1(\dz, \nz ,\frac{r-r_0}{T_0}),\label{proof initial}\\
    \partial_t u(r,0)&=T_0^{-\frac{p+1}{p-1}}(\frac{2}{p-1}w(y,s_0)+y.\partial_y w(y,s_0)+ \partial_s w(y,s_0))\\
    &=T_0^{-\frac{p+1}{p-1}}\left(\frac{2}{p-1}\kappa^*_1(\dz, \nz ,y)+y.\partial_y\kappa^*_1(\dz, \nz ,y)+ \kappa^*_2(\dz, \nz ,y)\right)\\
    &=\frac{2}{p-1}T_0^{-\frac{p+1}{p-1}}\frac{\kappa^*_1(\dz, \nz ,y)}{1+\dz y+\nz}.\\
    \end{split}
\end{equation*}    
Together with \eqref{def u similarty var}, we obtain
\begin{equation}\label{singular initial data}
\begin{split}
    &u(r,0)=T_0^{-\frac{2}{p-1}}\kappa^*_1\left(\dz, \nz ,\frac{r-r_0}{T_0}\right)=\uu(\dz,\nz,r_0,T_0,r,0),\\
    &\partial_tu(r,0)=T_0^{-\frac{2}{p-1}}\frac{2}{p-1}\frac{\kappa^*_1\left(\dz, \nz ,\frac{r-r_0}{T_0}\right)}{T_0(1+\nz)+\dz (r-r_0)}=\frac{1}{1+\nz}\partial_t\uu(\dz,\nz,r_0,T_0,r,0),
\end{split}
\end{equation}
using \eqref{kappa*1} and $\uu$ is given in \eqref{def uu}. Let us now define $u$ outside the cone $\Gamma(r_0)$. For that, we will first extend the definition of the initial data outside the basis of the cone, namely $[r_0-T_0,r_0+T_0]$, thanks to some cutoff function. Note first from \eqref{initial data soliton} that $\kappa^*(\dz,\nz,y)$ has no singularity for $y\in (-1,1)$, if $s_0$ is large enough. Accordingly, $u(r,0)$ has no singularity for $r\in [r_0-T_0,r_0+T_0]$. Therefore, by continuity, we may consider $\varepsilon_0>0$ small enough such that on $[r_0-T_0-2\varepsilon_0,r_0+T_0+2\varepsilon_0]$, $\uu(\dz,\nz,r_0,T_0,r,0)$ has no singularity (see \eqref{def uu}). Let us then consider the cut-off function $\chi\in \mathcal{C}^\infty_0$, such that $\chi\equiv 1$ on $[r_0-T_0-\varepsilon_0,r_0+T_0+\varepsilon_0]$, $\chi_0\equiv 0$ on $\R\backslash[r_0-T_0-2\varepsilon_0,r_0+T_0+2\varepsilon_0]$. Multiplying \eqref{singular initial data} with $\chi$ and denoting the result by $(u_0,u_1)$, we obtain \eqref{initial data u}. Clearly, $(U_0,U_1)\in H^1_{loc}\times L^2_{loc}(\R^N)$, where $U_i(x)=u_i(|x|)$.
\medskip

From the Cauchy theory applied to \eqref{eq initial}, there is a unique solution to the equation \eqref{eq radial} which is radial. From the uniqueness and finite time speed of propagation, the value of that solution in $\Gamma(r_0)$ defined in \eqref{Gamma r_0} is already known. It is given in \eqref{def u similarty var}. For simplicity, we will use the same notation for the solution, we have just got from the application of the Cauchy problem.

Note that, we have two cases for the domain $D$ of definition of $u$:\\
\textbf{Case 1:} $D=\{t\geq 0\}$.\\
\textbf{Case 2:} $D$ is included in  $\{0\leq t <T(r)\}$ for some 1-Lipschitz curve $r\mapsto T(r)$.

Let us show that Case 2 occurs with $T(r_0)=T_0$. We assume, by contradiction, that $u$ is global (Case 1) or that $u$ blows up with $T(r_0)>T_0$. We already know that $u$ is defined in $\Gamma(r_0)$. Hence, $T(r_0)\geq T_0$. In both cases, $u$ is defined on the cone $\mathcal{C}_{r_0,T_0+\eta_0,1}\cap \{t\geq 0\}$. In particular, the $L^2$ average of $u$ on slices of $\Gamma(r_0)$ should be bounded, at one hand .On the other hand, we have that

\begin{equation}\label{conclusion thm}
\begin{split}
\frac{1}{T_0-t}\int_{|r-r_0|< T_0-t}u^2dr&\geq \frac{1}{T_0-t}\int_{|r-r_0|<\frac{T_0-t}{2}}u^2dr\\
&=(T_0-t)^{-\frac{4}{p-1}}\int^{1/2}_{-1/2}w^2 dy\\
&\geq(T_0-t)^{-\frac{4}{p-1}}\int^{1/2}_{-1/2}w^2 \rho(y)dy.
\end{split}
\end{equation}
%$$\int_{|r-r_0|< T_0-t}u^2dr\geq\int_{|r-r_0|<\frac{T_0-t}{2}}(\partial_r u)^2dr\geq(T_0-t)^{-\frac{4}{p-1}}\int^{1/2}_{-1/2}(\partial_y w)^2 dy\geq(T_0-t)^{-\frac{4}{p-1}}\int^{1/2}_{-1/2}(\partial_y w)^2 \rho(y)dy$$
From Proposition \ref{sol conv solition}, we have that 
$$\int^{1/2}_{-1/2}w^2 \rho(y)dy\underset{t\rightarrow T_0}{\longrightarrow} \int^{1/2}_{-1/2}\kappa(\dhz)^2\rho(y) dy>0.$$
Together with \eqref{conclusion thm}, we conclude that 
$$\frac{1}{T_0-t}\int_{|r-r_0|< T_0-t}u^2dr\longrightarrow +\infty \text{ as } t\rightarrow + \infty,$$
and a contradiction follows.

Therefore, we have that case 2 occurs, and $u$ is defined below the graph of some 1-Lipschitz function $r\mapsto T(r)$ with $T(r_0)=T_0$. Since we have a convergence of $(w(s),\partial_s w(s))$ to one soliton in $\mathcal{H}$ in Proposition \ref{sol conv solition}, one may see from Theorem 6 in \cite{MZ2011BSM} that $r_0$ is a non-characteristic point. Since \eqref{conv in H} follows from Proposition \ref{sol conv solition}, we concludes the proof of Theorem \ref{Thm}.

\appendix

\section{Toolbox}
In this section, we give some useful tools required in this work. We will not give the proof since they are easy or already given in the cited references. We have first the following Hardy-Sobolev inequalities from \cite{MZ2012}.
\begin{lemma}[Hardy-Sobolev inequality]\label{estimates in different norms}

For $h\in \mathcal{H}_0$, we have
\begin{equation}\label{hardy-sobolev identity}
        ||h||_{L^{2}_{\frac{\rho}{1-y^2}}(-1,1)}+||h||_{L^{p+1}_\rho(-1,1)}+||h(1-y^2)^{\frac{1}{p-1}}||_{L^{\infty}_\rho(-1,1)}\leq C||h||_{\mathcal{H}_0},
\end{equation}
where $\rho$ and $\mathcal{H}_0$ are given in \eqref{def L} and \eqref{def H0}.
\end{lemma}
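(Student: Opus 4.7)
The plan is to establish each of the three bounds in \eqref{hardy-sobolev identity} separately, relying on the fact that $\alpha := 2/(p-1) > 0$, so that the weight $\rho = (1-y^2)^\alpha$ enjoys a strictly positive power decay at the endpoints. By the symmetry $y \mapsto -y$, it suffices to argue on $[0,1)$.

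First I would tackle the pointwise, weighted $L^\infty$ estimate. For $y \in (0,1)$, the fundamental theorem of calculus gives $h(y) = h(0) + \int_0^y h'(t)\, dt$. Splitting $h'(t) = h'(t)(1-t^2)^{(\alpha+1)/2} \cdot (1-t^2)^{-(\alpha+1)/2}$ and applying Cauchy--Schwarz,
\[
|h(y) - h(0)| \leq \left(\int_0^y (h')^2 (1-t^2)^{\alpha+1}\, dt\right)^{1/2} \left(\int_0^y (1-t^2)^{-(\alpha+1)}\, dt\right)^{1/2} \leq \frac{C}{\sqrt{\alpha}}(1-y)^{-\alpha/2}\,||h||_{\mathcal{H}_0},
\]
since the second integral behaves like $(1-y)^{-\alpha}/\alpha$ near $1$. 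Multiplying by $(1-y^2)^{1/(p-1)} = (1-y^2)^{\alpha/2}$ exactly cancels the blow-up, yielding $|h(y)(1-y^2)^{1/(p-1)}| \leq C(|h(0)| + ||h||_{\mathcal{H}_0})$, and $|h(0)|$ is controlled by $||h||_{\mathcal{H}_0}$ via the classical 1D Sobolev embedding on a compact subinterval where $\rho$ is bounded away from zero.

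Next I would treat the Hardy-type bound $\int h^2 \rho/(1-y^2)\,dy \leq C\,||h||_{\mathcal{H}_0}^2$. Writing $h^2(y) = h^2(0) + 2\int_0^y hh'(t)\,dt$ and integrating against $(1-y)^{\alpha-1}\,dy$ on $(0,1)$, Fubini produces
\[
\int_0^1 h^2 (1-y)^{\alpha-1}\, dy \leq C\,h(0)^2 + \frac{2}{\alpha} \int_0^1 |hh'|(t)(1-t)^\alpha\, dt,
\]
and a final Cauchy--Schwarz with the split $(1-t)^\alpha = (1-t)^{(\alpha-1)/2}(1-t)^{(\alpha+1)/2}$, combined with absorption into the left-hand side, closes the estimate. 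The positivity of $\alpha$ is precisely what makes the coefficient $1/\alpha$ finite.

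Finally, the $L^{p+1}_\rho$ estimate follows by interpolation between the two previous bounds. Using the identity $|h|^{p-1} = |h(1-y^2)^{1/(p-1)}|^{p-1}(1-y^2)^{-1}$,
\[
\int_{-1}^1 |h|^{p+1}\rho\, dy \leq ||h(1-y^2)^{1/(p-1)}||_{\infty}^{p-1} \int_{-1}^1 h^2 \frac{\rho}{1-y^2}\, dy \leq C\, ||h||_{\mathcal{H}_0}^{p+1}.
\]
The main obstacle throughout is the endpoint degeneracy at $y = \pm 1$: both $\rho$ and $\rho/(1-y^2)$ behave badly there, so the unweighted Sobolev embedding is useless, and one must exploit the exact exponent $\alpha > 0$ at each step to extract the requisite endpoint decay from the $\mathcal{H}_0$ norm.
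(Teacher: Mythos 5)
Your argument is correct, and it is worth noting that the paper does not actually prove this lemma: it simply refers to Claim A.1 of \cite{MZ2012}, so your self-contained elementary proof is a genuine alternative to citation. The three steps are sound: the weighted $L^\infty$ bound via the fundamental theorem of calculus and Cauchy--Schwarz with the split $(1-t^2)^{\pm(\alpha+1)/2}$ (the exponent $\alpha=2/(p-1)$ of the resulting blow-up $(1-y)^{-\alpha/2}$ is exactly cancelled by the weight $(1-y^2)^{1/(p-1)}$); the Hardy bound via Fubini and absorption; and the $L^{p+1}$ bound by interpolating the first two, since $|h|^{p+1}\rho=\bigl(|h|(1-y^2)^{1/(p-1)}\bigr)^{p-1}h^2\rho/(1-y^2)$. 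Two small points deserve a sentence in a polished write-up. First, in the absorption step you subtract $\frac12\int_0^1 h^2(1-y)^{\alpha-1}dy$ from both sides, but this quantity is not known to be finite a priori for $h\in\mathcal{H}_0$; the standard fix is to run the same computation on $(0,1-\varepsilon)$, where finiteness is clear, obtain an $\varepsilon$-uniform constant, and let $\varepsilon\to0$ by monotone convergence. Second, you should record that $|h(0)|\le C\|h\|_{\mathcal{H}_0}$ (as you indicate, via the standard $H^1\hookrightarrow L^\infty$ embedding on $[-1/2,1/2]$, where $\rho$ and $1-y^2$ are bounded below) \emph{before} both the $L^\infty$ and the Hardy steps, since both use it. Neither point is a gap in the mathematics, and the overall strategy is the standard one behind the cited reference, which (in the Merle--Zaag appendices) is usually organized instead around a covering of $(-1,1)$ by intervals on which $1-y^2$ is essentially constant; your direct endpoint computation buys a shorter and more transparent proof at the price of being specific to one space dimension.
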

\begin{proof}
    See proof of Claim A.1 in \cite{MZ2012}
\end{proof}

We also have the following estimates on the solitons from \cite{MZ2012}:
\begin{lemma}[estimates on solitons]$ $
\begin{enumerate}[label=(\roman*)]
    \item For all $d\in (-1,1)$, it holds that
\begin{equation}\label{bound kappa H_0}
    ||\kappa(d)||_{\mathcal{H}_0}\leq CE(\kappa_0),
    \end{equation}
where
$$E(r(s)) = \int_{-1}^{1} \left( \frac{1}{2} (\partial_s r)^2 + \frac{1}{2} (\partial_y r)^2 (1 - y^2) + \frac{(p + 1)}{(p - 1)^2} r^2 - \frac{1}{p + 1} |r|^{p + 1} \right) \rho \, dy.
$$

\item For all $|d|<1$ and $\nu>-1+|d|$, then we have
\begin{equation}\label{kappa* norm lambda}
\begin{split}
||\kappa^*(d,\nu)||_{\mathcal{H}}\leq C\lambda(d,\nu)+C\mathds{1}_{\{\nu<0\}}\frac{|\nu|}{\sqrt{1-d^{2}}}\lambda^{\frac{p+1}{2}}(d,\nu),
\end{split}
\end{equation}
where $\lambda=\lambda(d,\nu)=\frac{(1-d^{2})^\frac{1}{p-1}}{((1+\nu)^2-d^{2})^\frac{1}{p-1}}$.
\item For  $|d|<1$ and $\frac{|\nu|}{1-|d|}\leq \varepsilon$, for some $\varepsilon>0$ small enough, we have
\begin{equation}\label{bound lambda-1}
\begin{split}
&|\lambda(d^*,\nu)^{1-p}-1| =\left|\left(1+\frac{\nu}{1-|d^*|}\right)\left(1+\frac{\nu}{1+|d^*|}\right)-1\right|\leq C\frac{|\nu|}{1-|d^*|}.
\end{split}    
\end{equation}
\end{enumerate}
\end{lemma}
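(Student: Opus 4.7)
The three items have quite different characters, and I would treat them separately.

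For item (i), the key tool is the change of variables $y' = (y+d)/(1+dy)$, which is a self-bijection of $(-1,1)$. Combined with the algebraic identity $(1-d^2)(1-y^2) = (1+dy)^2 - (y+d)^2$ and the Jacobian $dy = (1-d^2)(1-dy')^{-2}\,dy'$, direct substitution in the explicit formulas for $\kappa(d,y)$ and $\partial_y \kappa(d,y)$ reduces both $\int \kappa(d)^2 \rho\,dy$ and $\int (\partial_y \kappa(d))^2(1-y^2)\rho\,dy$ to integrals of the form $C\int_{-1}^1 (1-y'^2)^\alpha (1-dy')^{-2}\,dy'$ with $\alpha \in \{2/(p-1),\,(p+1)/(p-1)\}$. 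These are uniformly bounded in $d\in(-1,1)$: the only potential issue is at $y' = \pm 1$ when $d\to\pm 1$, but the zero of $(1-y'^2)^\alpha$ beats the singularity of $(1-dy')^{-2}$ since $\alpha>1$ in the decisive case, and monotone convergence closes the argument.

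For item (ii), the starting point is the key factorisation
\[
\kappa^*_1(d,\nu,y) = \lambda(d,\nu)\,\kappa(d^*,y), \qquad d^* = \frac{d}{1+\nu},
\]
obtained by pulling the common factor $(1+\nu)^{2/(p-1)}$ out of $(1+dy+\nu)^{2/(p-1)} = (1+\nu)^{2/(p-1)}(1+d^*y)^{2/(p-1)}$. Together with item (i) applied at parameter $d^*$, this immediately gives $||\kappa^*_1||_{\mathcal H_0}\le C\lambda$. For the second component I would use
\[
\kappa^*_2 = -\frac{2\nu}{(p-1)(1+dy+\nu)}\kappa^*_1
\]
and split according to the sign of $\nu$. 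When $\nu\ge 0$ the denominator $(1+dy+\nu)$ is bounded below, and performing the change of variables of (i) together with a careful balance of the factor $\nu$ against the appropriate powers of $(1+\nu)^2-d^2$ yields $||\kappa^*_2||_{L^2_\rho}\le C\lambda$. When $\nu<0$, the denominator can become small near the endpoints, and evaluating the corresponding integral exactly through the same substitution produces precisely the extra term $|\nu|(1-d^2)^{-1/2}\lambda^{(p+1)/2}$ in the stated bound.

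For item (iii) the argument is pure algebra: by definition
\[
\lambda(d,\nu)^{1-p} = \frac{(1+\nu)^2-d^2}{1-d^2} = \frac{(1+\nu-d)(1+\nu+d)}{(1-d)(1+d)},
\]
which after a sign check separately in the cases $d\ge 0$ and $d<0$ equals $\bigl(1+\tfrac{\nu}{1-|d|}\bigr)\bigl(1+\tfrac{\nu}{1+|d|}\bigr)$. Subtracting $1$ and expanding yields $\lambda^{1-p}-1 = \nu(2+\nu)/(1-d^2)$; under the hypothesis $|\nu|/(1-|d|)\le\varepsilon$ one has $|\nu|\le 2\varepsilon$, so $|2+\nu|$ is bounded, and since $1+|d|\ge 1$ one arrives at $|\lambda^{1-p}-1|\le C|\nu|/(1-|d|)$. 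The chief technical point throughout is the $\nu<0$ case of item (ii): na\"ive pointwise bounds on $(1+dy+\nu)^{-1}$ produce a strictly weaker estimate, and recovering the sharp exponent $(p+1)/2$ on $\lambda$ requires the exact evaluation of the weighted integral through the change of variables of (i), with careful bookkeeping of the interplay between the near-boundary singularity of $(1+dy+\nu)^{-1}$ and the weight $(1-y^2)^{2/(p-1)}\rho$.
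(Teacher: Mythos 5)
Your proposal is correct, and it is essentially the argument behind the source on which the paper relies: the paper gives no proof of this lemma, deferring items (i)--(ii) to \cite[Claims A.1 and A.2]{MZ2012} and calling (iii) easy, and your conformal substitution $y'=(y+d)/(1+dy)$ together with the factorization $\kappa_1^*(d,\nu)=\lambda(d,\nu)\,\kappa(d^*)$ with $d^*=d/(1+\nu)$ is precisely the mechanism used there (including the exact evaluation of $\|\kappa_2^*\|_{L^2_\rho}$, which gives $C|\nu|(1-d^2)^{-1/2}\lambda^{(p+1)/2}$ for either sign of $\nu$, the term being absorbed into $C\lambda$ when $\nu\ge 0$). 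One remark on (iii): your computation establishes the identity for $\lambda(d,\nu)^{1-p}$, which is what the middle expression of \eqref{bound lambda-1} equals; the printed left-hand side $\lambda(d^*,\nu)^{1-p}=\bigl((1+\nu)^2-d^{*2}\bigr)/\bigl(1-d^{*2}\bigr)$ does not coincide with that product in general, so the $d^*$ there appears to be a typo for $d$, and your proof addresses the intended statement.
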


\begin{proof}
    See \cite[Claim A.1, (A.1) and (i) of Claim A.2]{MZ2012} for \eqref{hardy-sobolev identity}, \eqref{bound kappa H_0}, \eqref{kappa* norm lambda}. Estimate \eqref{bound lambda-1} is easy to obtain.
\end{proof}
In following lemma,we give a bound on $\varphi$ using the norm of $q$.
\begin{lemma}[Estimate on $\varphi$]
We have the following estimate on  $\varphi$, defined in \eqref{def varphi}, given by the following
    \begin{equation}\label{ineq norm q2 & A}
    \frac{1}{C^*}||q||^2_{\mathcal{H}}\leq \varphi(q,q) \leq C^*||q||^2_{\mathcal{H}}.
    \end{equation}

\end{lemma}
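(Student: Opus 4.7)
The plan is to compare $\varphi(q,q)$ with its one-dimensional analogue $\bar\varphi(q,q)$, obtained by replacing the potential $\psi_{d,\nu}$ by $\psi_{d^*}$ defined in \eqref{def psi d} (so built from the standard soliton $\kappa(d^*)$ rather than the ``generalized'' one $\kappa_1^*(d,\nu)$). For $\bar\varphi$, the coercivity estimate is already available in the one dimensional literature: under the orthogonality conditions $\pi^{d^*}_l(q)=0$ for $l=0,1$ that we imposed through the modulation \eqref{def w,kappa,d,nu,q}, the arguments of \cite{MZ2007,MZ2012} yield
$$\frac{1}{C}||q||_{\mathcal{H}}^2 \leq \bar\varphi(q,q) \leq C||q||_{\mathcal{H}}^2.$$

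For the upper bound on $\varphi$, I would bound the only non-trivial term $|\int \psi_{d,\nu} q_1^2 \rho\, dy|$ directly. Inserting \eqref{kappa*1} into \eqref{def psi} and using $\rho=(1-y^2)^{2/(p-1)}$, one checks that $|\psi_{d,\nu}(y)|$ is controlled pointwise by a constant times $(1-y^2)^{-1}$ thanks to the constraint $\nu>-1+|d|$ and the smallness estimate \eqref{consequances modulation technique}; combined with the Hardy-Sobolev inequality \eqref{hardy-sobolev identity} applied to $q_1$, this gives the desired upper bound by $||q||_{\mathcal{H}}^2$.

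For the lower bound, I would write
$$\varphi(q,q) = \bar\varphi(q,q) - \int_{-1}^{1}\bigl(\psi_{d,\nu}(y)-\psi_{d^*}(y)\bigr) q_1(y)^2\, \rho(y)\, dy,$$
use coercivity of $\bar\varphi$ on the main term, and absorb the remainder. The key estimate to establish is the pointwise bound
$$|\psi_{d,\nu}(y)-\psi_{d^*}(y)| \leq C\frac{|\nu|}{1-|d|}\,\kappa(d^*)(y)^{p-1}, \quad |y|<1,$$
which follows from a first-order expansion of $\kappa_1^*(d,\nu)^{p-1}$ around $\nu=0$, using \eqref{kappa*1}, \eqref{def kappa}, \eqref{def d*} and the comparison \eqref{equiv d and d*}. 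Combined with the Hardy-Sobolev inequality \eqref{hardy-sobolev identity} and the bound $|\nu|/(1-|d|)\leq s_0^{-1/4}$ from \eqref{consequances modulation technique}, the remainder is dominated by $(C/s_0^{1/4})||q||_{\mathcal{H}}^2$ and can be absorbed into the coercivity of $\bar\varphi$ for $s_0$ large enough.

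The main obstacle will be the uniform pointwise control of $\psi_{d,\nu}-\psi_{d^*}$ near the endpoints $y=\pm 1$: the factor $(1+dy+\nu)^{-2}$ appearing in $\psi_{d,\nu}$ could in principle blow up if $\nu$ approached its lower admissible bound $|d|-1$, but in our regime \eqref{consequances modulation technique} forces $|\nu|$ to be much smaller than $1-|d|$, so $1+dy+\nu$ and $1+d^*y$ remain comparable uniformly in $y$ and the expansion closes.
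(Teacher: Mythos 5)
Your proposal is correct and follows essentially the same route as the paper, which simply invokes the one-dimensional coercivity of \cite[Lemma C.1 (ii)]{MZ2012} perturbed via the smallness $|\nu|/(1-|d|)\leq s_0^{-1/4}$ from \eqref{consequances modulation technique} and the comparison of $\kappa_1^*(d,\nu)$ with $\kappa(d^*)$ encoded in \eqref{bound lambda-1}; your pointwise bound on $\psi_{d,\nu}-\psi_{d^*}$ is in fact an exact identity, since $\kappa_1^*(d,\nu)^{p-1}-\kappa(d^*)^{p-1}=-\nu(2+\nu)\,\kappa_0^{p-1}(1+dy+\nu)^{-2}$, which makes the absorption argument close cleanly.
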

\begin{proof}
The reader may see that the proof in \cite[Lemma C.1, (ii)]{MZ2012} combined with \eqref{consequances modulation technique} and \eqref{bound lambda-1} for $s_0$ large enough holds as well for our case.
\end{proof}

\textbf{Statements:} This work did not generate any datasets. All relevant information is included in the manuscript.

\printbibliography
\end{document}